\documentclass[11pt,a4paper]{article}
\usepackage[top=3cm, bottom=3cm, left=3cm, right=3cm]{geometry}
\usepackage{tabu}
\usepackage{breqn}
\usepackage[hang,flushmargin]{footmisc} 

\usepackage{amsmath,amsthm,amssymb,graphicx, multicol, array}
\usepackage{enumerate}
\usepackage{enumitem}
\usepackage{hyperref}
\usepackage{setspace}
\usepackage{xcolor}
\usepackage{currfile}



\parindent 0pt

\usepackage{tabto}


\DeclareMathOperator{\ord}{ord}
\DeclareMathOperator{\ind}{ind}


\newtheorem{thm}{Theorem}[section]
\newtheorem{lem}{Lemma}[section]

\newtheorem{cor}{Corollary}[section]
\newtheorem{dfn}{Definition}[section]

\newcommand{\N}{\mathbb{N}}
\newcommand{\Z}{\mathbb{Z}}
\newcommand{\Q}{\mathbb{Q}}

\newcommand{\C}{\mathbb{C}}
\newcommand{\F}{\mathbb{F}}

\usepackage{fancyhdr}
\pagestyle{fancy}
\lhead{\textsc{Simultaneous Elements Of Prescribed Multiplicative Orders}}
\chead{}
\rhead{\thepage}
\lfoot{}
\cfoot{}
\rfoot{}


\title{Simultaneous Elements Of Prescribed Multiplicative Orders}
\date{}
\author{N. A. Carella}

\begin{document}
\thispagestyle{empty}
\date{}

\maketitle
\textbf{\textit{Abstract}:} Let $u\ne \pm 1$, and $v\ne \pm 1$ be a pair of fixed relatively prime squarefree integers, and let $d\geq 1$, and $e \geq1$ be a pair of fixed integers. It is shown that there are infinitely many primes $p\geq 2$ such that $u$ and $v$ have simultaneous prescribed multiplicative orders $\ord_pu=(p-1)/d$ and $\ord_pv=(p-1)/e$ respectively, unconditionally. In particular, a squarefree odd integer $u>2$ and $v=2$ are simultaneous primitive roots and quadratic residues (or quadratic nonresidues) modulo $p$ for infinitely many primes $p$, unconditionally. \let\thefootnote\relax\footnote{ \today \date{} \\
\textit{AMS MSC2020}:Primary 11A07; Secondary 11N13 \\
\textit{Keywords}: Distribution of primes; Primes in arithmetic progressions; Simultaneous primitive roots; Simultaneous prescribed orders; Schinzel-Wojcik problem.}

\tableofcontents
\section{Introduction} \label{S8888}
The earliest study of admissible integers $k$-tuples $u_1,u_2,\ldots,u_k \in \Z$ of simultaneous primitive roots modulo a prime $p$ seems to be the conditional result in \cite{MK76}. Much more general results for admissible rationals $k$-tuples $u_1,u_2,\ldots,u_k\in \Q$ of simultaneous elements of independent (or pseudo independent) multiplicative orders modulo a prime $p\geq2$ are considered in \cite{PS09}, and \cite{JO19}. An important part of this problem is the characterization of admissible rationals $k$-tuples. There are various partial characterizations of admissible rationals $k$-tuples. For example, an important criterion states that a rationals $k$-tuple is admissible if and only if
\begin{equation}\label{eq8888.000}
u_1^{e_1}u_2^{e_2}\cdots u_k^{e_k}\ne -1,
\end{equation}   
where $e_1,e_2,\ldots,e_k\in \Z$ are integers. A more general characterization and the proof appears in \cite[Proposition 14]{PS09}, \cite[Lemma 5.1]{JO19}. Other ad hoc techniques are explained in \cite{JP21}. The characterization of admissible triple of rational numbers $a,b,c\in \Q-\{-1,0,1\}$ with simultaneous equal multiplicative orders
\begin{equation}\label{eq8888.005}
\ord_p a =\ord_p b= \ord_p c,
\end{equation}
known as the Schinzel-Wojcik problem, is an open problem, see \cite[p.\ 2]{PS09}, and \cite{FM18}. The specific relationship between the orders $\ord_p u\mid d$ and $\ord_p v\mid d$ of a pair of integers $u,v \ne\pm1$ and some divisor $d\mid p-1$, and the generalization to number fields, and abelian varieties are studied in several papers as \cite{CS97}, \cite{BS10}, et cetera, and counterexamples are produced in \cite{CS97} and \cite{LS06}.

\begin{dfn}\label{dfn8888.000} {\normalfont A $k$-tuple $u_1,u_2,\ldots, u_k\ne\pm1$ of rational numbers is called an \textit{admissible} $k$-tuple if the product is multiplicatively independent over the rational numbers:  
\begin{equation}\label{eq8888.010}
u_1^{e_1}u_2^{e_2}\cdots u_k^{e_k}\ne 1,
\end{equation}  
for any list of integer exponents $e_1,e_2,\ldots, e_k \in \Z^{\times}$.
}
\end{dfn}
Relatively prime $k$-tuples, and relatively prime and squarefree $k$-tuples are automatically multiplicatively independent over the rational numbers. However, squarefree $k$-tuples are not necessarily multiplicatively independent over the rational numbers, for example, $3,5,15$. Any $k$-tuple of integers that satisfies the Lang-Waldschmid conjecture is an admissible $k$-tuple. Specifically,
\begin{equation}\label{eq8888.020}
\left|u_1^{e_1}u_2^{e_2}\cdots u_k^{e_k} -1\right | \geq\frac{C(\varepsilon)^kE}{\left |u_1u_2\cdots u_k\cdot e_1e_2\cdots ae_k\right |^{1+\varepsilon}},
\end{equation}    
where $C(\varepsilon)>0$ is a constant, $E=\max\{|e_i|\}$, and $\varepsilon>0$ is a small number, confer \cite[Conjecture 2.5]{WM03} for details.   
\begin{dfn}\label{dfn8888.005} {\normalfont Fix an admissible $k$-tuple $u_1,u_2,\ldots, u_k\ne\pm1$ of rational numbers. The elements $u_1,u_2,\ldots,u_k \in \F_p$ are said to be a simultaneous $k$-tuple of equal multiplicative orders modulo a prime $p\geq 2$, if $\ord_pu_1\mid p-1$, and  
\begin{equation}\label{eq8888.025}
\ord_pu_1 =\ord_pu_2=\cdots =\ord_pu_k,
\end{equation}  
infinitely often as $p\to \infty$.}
\end{dfn}

\begin{dfn}\label{dfn8888.010} {\normalfont Fix an admissible $k$-tuple $u_1,u_2,\ldots, u_k\ne\pm1$ of rational numbers. The elements $u_1,u_2,\ldots,u_k \in \F_p$ are said to be a simultaneous $k$-tuple of decreasing multiplicative orders modulo a prime $p$, if $\ord_pu_i\mid p-1$, and
\begin{equation}\label{eq8888.030}
\ord_pu_1 >\ord_pu_2>\cdots> \ord_pu_k,
\end{equation}
infinitely often as $p\to \infty$. 
}
\end{dfn}

\begin{dfn}\label{dfn8888.015} {\normalfont Fix an admissible $k$-tuple $u_1,u_2,\ldots, u_k\ne\pm1$ of rational numbers, and fix an index integers $k$-tuple $d_1,d_2,\ldots, d_k\geq 1$. The elements $u_1,u_2,\ldots,u_k \in \F_p$ are called a simultaneous $k$-tuple of prescribed multiplicative orders modulo a prime $p$, if  
\begin{equation}\label{eq8888.035}
\ord_pu_1 =(p-1)/d_1, \quad\ord_pu_2=(p-1)/d_2, \quad\cdots\quad \ord_pu_k=(p-1)/d_k,
\end{equation}
infinitely often as $p\to \infty$. 
}
\end{dfn}
Conditional on the GRH and or the $k$-tuple primes conjecture, several results for the existence and the densities of simultaneous rationals $k$-tuples have been proved in the literature, see \cite{PS09}, \cite{FM18}, and \cite{JO19}. This note studies the unconditional asymptotic formulas for the number of primes with simultaneous elements of prescribed multiplicative orders.

\begin{thm} \label{thm8888.002} Fix a pair of relatively prime squarefree $u\ne \pm1$, and $v\ne \pm1$ rational numbers , and fix a pair of integers $d\geq 1$, and $e\geq 1$. If $x\geq 1$ is a sufficiently large number, and the indices $d, e \ll (\log x)^B$, with $B\geq 0$, then, the number of primes $p\in [x,2x]$ with simultaneous elements $u$ and $v$ of prescribed multiplicative orders $\ord_pu=(p-1)/d$ and $\ord_pv = (p-1)/e$ modulo $p\geq 2$, has the asymptotic lower bound
\begin{equation}\label{eq8888.040}
R_2(x,u,v)\gg\frac{x}{(\log x)^{4B+1}(\log \log x)^2}
\end{equation}
as $x \to \infty$, unconditionally.
\end{thm}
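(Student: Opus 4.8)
The plan is to detect both order conditions by a single product of characteristic functions and to split off a main term built from the principal characters. Since $\ord_p u=(p-1)/d$ forces $d\mid p-1$ and likewise $e\mid p-1$, I would first restrict to the arithmetic progression $p\equiv 1 \pmod{\lcm(d,e)}$. On this progression I use the Möbius--character identity
\[
\mathbf{1}\!\left[\ord_p u=\tfrac{p-1}{d}\right]=\sum_{s\mid (p-1)/d}\frac{\mu(s)}{ds}\sum_{\chi^{ds}=1}\chi(u),
\]
valid because each squarefree $s\mid(p-1)/d$ satisfies $ds\mid p-1$, together with the analogous identity in $(t,\psi,v,e)$ for $v$. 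Multiplying the two indicators and summing over $p\in[x,2x]$ writes $R_2(x,u,v)$ as a double sum over squarefree $s,t$ and over the characters $\chi,\psi$ whose orders divide $ds,et$.

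Second, I would isolate $M(x)$, the contribution of the principal pair $\chi=\psi=\chi_0$. For $p\nmid uv$ this collapses, via $\sum_{s\mid m}\mu(s)/s=\phi(m)/m$, to
\[
M(x)=\sum_{\substack{p\in[x,2x]\\ \lcm(d,e)\mid p-1}}\frac{1}{de}\cdot\frac{\phi((p-1)/d)}{(p-1)/d}\cdot\frac{\phi((p-1)/e)}{(p-1)/e}.
\]
Every factor here is positive, so a clean lower bound suffices. I would insert the elementary estimate $\phi(m)/m\gg 1/\log\log x$ for all $m\le 2x$, use $d,e\ll(\log x)^B$ to get $1/(de)\gg(\log x)^{-2B}$, and count the primes by the Siegel--Walfisz theorem, which applies precisely because $\lcm(d,e)\le de\ll(\log x)^{2B}$ lies inside the admissible range, yielding a count $\gg x/(\phi(\lcm(d,e))\log x)$ with $\phi(\lcm(d,e))\ll(\log x)^{2B}$. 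Assembling the four factors reproduces exactly $M(x)\gg x/((\log x)^{4B+1}(\log\log x)^2)$, the target order of magnitude; this explains both the exponent $4B$ (two factors of $(\log x)^{2B}$ from $de$ and from $\phi(\lcm(d,e))$) and the $(\log\log x)^2$ (one totient factor for each of $u,v$).

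Third comes the error term $E(x)$, collecting every pair $(\chi,\psi)$ in which at least one character is nonprincipal. By orthogonality this is a weighted sum of incomplete prime sums $\sum_{p\equiv 1\,(\lcm(d,e))}\chi(u)\psi(v)$ with $\chi\psi$ nonprincipal; equivalently, each term counts primes $p$ splitting completely in a Kummer field $\Q(\zeta_{ds},u^{1/ds})$ and its analogue for $v$, so the natural tool is an effective Chebotarev or large-sieve input. To stay unconditional I would truncate the sieve, restricting $s\le z_1$ and $t\le z_2$ so that the conductors $ds,et$ remain small enough (say $\ll(\log x)^{A}$) for the unconditional Chebotarev/large-sieve estimate to hold, and then discard the divisors with $s>z_1$ or $t>z_2$ by a separate sieve upper bound showing that primes for which $(p-1)/d$ or $(p-1)/e$ carries an unusually large prime factor are too sparse to matter.

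I expect this third step to be the genuine obstacle. Bounding the nonprincipal contribution is exactly the point where the primitive-root/Artin circle of problems ordinarily invokes GRH, and the difficulty is structural: removing every prime divisor of $(p-1)/d$ is needed to force the order to be \emph{exactly} $(p-1)/d$, yet truncating the sieve at small conductors only controls the small prime divisors and, for a \emph{lower} bound on the exact-order count, the large prime divisors cannot simply be dropped. Reconciling the two competing requirements --- retaining enough sieve range that the principal main term $M(x)$ survives, while keeping the conductors small enough for an unconditional character-sum bound and provably negligible large-prime-factor tail --- and verifying that $E(x)=o(M(x))$ falls below $x/((\log x)^{4B+1}(\log\log x)^2)$, is the delicate heart of the argument and the step I would scrutinize most carefully.
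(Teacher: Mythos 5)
Your main-term computation matches the paper's (Lemma \ref{lem2888.100}): restricting to $p\equiv 1\pmod{de}$, inserting $\varphi(m)/m\gg 1/\log\log x$, and counting primes in the progression with modulus $\ll(\log x)^{2B}$ by the prime number theorem in arithmetic progressions yields exactly $x/((\log x)^{4B+1}(\log\log x)^2)$, and your accounting of where the exponent $4B$ and the factor $(\log\log x)^2$ come from is correct. The gap is the one you yourself flag: you never actually bound the nonprincipal contribution. The route you sketch --- multiplicative characters of order dividing $ds$, primes splitting in the Kummer fields $\Q(\zeta_{ds},u^{1/ds})$, effective Chebotarev, truncation of the divisor sum --- is precisely the route on which this circle of problems is known to require GRH, and your own description of the tension between keeping conductors small and controlling the large prime divisors of $(p-1)/d$ is an accurate diagnosis of why that route does not close unconditionally. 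As written, the proposal establishes only a lower bound for the principal-character piece, not for $R_2(x,u,v)$.

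The paper sidesteps this entirely by not using multiplicative characters at all. Its indicator function (Lemma \ref{lem3000.114}) is ``divisor-free'': it detects $\ord_p u=(p-1)/d$ by counting solutions of $\tau^{dn}=u$ with $\gcd(n,(p-1)/d)=1$, where $\tau$ is a primitive root modulo $p$, and expands the equation $\tau^{dn}-u=0$ with additive characters via $\frac{1}{p}\sum_{0\le k\le p-1}e^{i2\pi k(\tau^{dn}-u)/p}$. After multiplying the two indicators, the off-diagonal terms become double exponential sums of Mordell type, $\sum_{0<a<p}\sum_{\gcd(n,(p-1)/d)=1}e^{i2\pi a(\tau^{dn}-u)/p}$, which the paper bounds unconditionally by $p^{1-\varepsilon}$ (Theorem \ref{thm3344.500} and Lemma \ref{lem3700.200}, in the spirit of the Friedlander--Hansen--Shparlinski estimates for character sums with exponential functions), so that the total error is $\ll x^{1-2\varepsilon}$ (Lemma \ref{lem2777.400}) and is absorbed by the main term. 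That substitution --- additive characters plus exponential-sum technology in place of multiplicative characters plus Chebotarev --- is the single idea your outline is missing; without it, or something equivalent, the error term $E(x)=o(M(x))$ that your argument requires remains unproved.
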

In general, the number of primes $p\in [x,2x]$ such that a fixed admissible $k$-tuples of rational numbers $u_1,u_2,\ldots, u_k\ne\pm1$ has simultaneous prescribed multiplicative orders
\begin{equation}\label{eq8888.045}
\ord_pu_1=(p-1)/d_1, \quad \ord_pu_2=(p-1)/d_2, \quad \ldots, \quad \ord_pu_k=(p-1)/d_k,
\end{equation}
where $d_i\ll (\log x)^B$ are fixed indices, and $B\geq 0$, has the lower bound
\begin{equation}\label{eq8888.050}
R_k(x,u,v)\gg\frac{x}{(\log x)^{2Bk+1}(\log \log x)^k}
\end{equation}
as $x \to \infty$, unconditionally. \\

The maximal number of simultaneous primitive roots is bounded by $k=O\left ( \log p\right )$, see \cite[Section 14]{CN19}. The same upper bound is expected to hold for any combination of simultaneous $k$-tuple prescribed of multiplicative orders, but it has not been verified yet. The average multiplicative order of a fixed rational number $u\ne\pm1$ has the asymptotic lower bound
\begin{equation}\label{eq8888.053}
T_u(x)=\frac{1}{x}\sum_{\substack{n\leq x\\\gcd(u,n)=1}} \ord_nu\gg\frac{x}{\log x}e^{c(\log \log \log x)^{3/2}},
\end{equation}
where $c>0$ is a constant, the fine details are given in \cite{KP13}. Other information on the average multiplicative orders in finite cyclic groups are given in \cite{LF05}, and the literature. \\

A special case illustrates the existence of simultaneously prescribed primitive roots and quadratic residues (or quadratic nonresidues) in the prime finite field $\F_p$ for infinitely many primes $p$.
\begin{cor}\label{cor8888.104} Let $u\geq3$ be fixed a squarefree odd integer and let $v=2$. If $x\geq 1$ is a large number, then elements $u, 2 \in \F_p$ have multiplicative orders $\ord_pu=p-1$ and $\ord_p2=(p-1)/2$, simultaneously. Moreover, the number of such primes has the asymptotic lower bound
\begin{equation}\label{eq8888.055}
R_2(x,u,2)\gg\frac{x}{(\log x)(\log \log x)^2}
\end{equation}
as $x \to \infty$, unconditionally.
\end{cor}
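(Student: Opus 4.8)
The plan is to obtain this corollary as a direct specialization of Theorem~\ref{thm8888.002}, so that the work reduces to checking that the hypotheses apply and then reading off the exponents. First I would verify admissibility of the pair $(u,2)$. Since $u\geq 3$ is odd we have $\gcd(u,2)=1$, and both $u$ and $2$ are squarefree with $u\ne\pm1$ and $2\ne\pm1$. By the observation recorded after Definition~\ref{dfn8888.000}, that relatively prime squarefree tuples are automatically multiplicatively independent over $\Q$, the pair $(u,2)$ is admissible and therefore meets the standing hypotheses of Theorem~\ref{thm8888.002}.

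Next I would match the prescribed orders to the index notation of~\eqref{eq8888.035}. The requirement $\ord_pu=p-1$ is exactly $\ord_pu=(p-1)/d$ with $d=1$, while $\ord_p2=(p-1)/2$ is $\ord_p2=(p-1)/e$ with $e=2$. Both indices are fixed absolute constants, independent of $x$, so they trivially satisfy the growth restriction $d,e\ll(\log x)^B$ with the smallest admissible exponent $B=0$.

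Substituting $B=0$ into the lower bound~\eqref{eq8888.040} then yields
\begin{equation}
R_2(x,u,2)\gg\frac{x}{(\log x)^{4\cdot 0+1}(\log\log x)^2}=\frac{x}{(\log x)(\log\log x)^2},
\end{equation}
which is precisely~\eqref{eq8888.055}. The final step is to translate the two order equalities back into the stated arithmetic. The condition $\ord_pu=p-1$ says exactly that $u$ is a primitive root modulo $p$, and $\ord_p2=(p-1)/2$ forces $2^{(p-1)/2}\equiv 1\pmod p$, so that $2$ is a quadratic residue modulo $p$; hence $u$ and $2$ realize the claimed pair of prescribed orders simultaneously for the infinitely many primes counted by $R_2(x,u,2)$.

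Since the corollary is a pure specialization, there is no genuine analytic obstacle: all of the difficulty is already absorbed into Theorem~\ref{thm8888.002}. The only points requiring attention are the admissibility check, which is immediate from coprimality and squarefreeness, and the passage from the order equalities to the interpretation in terms of primitive roots and quadratic residues. One minor bookkeeping remark is that the constants $d=1$ and $e=2$ must be confirmed to lie in the admissible range $d,e\ll(\log x)^B$; taking $B=0$ makes this automatic and is exactly what reproduces the single logarithmic power in the denominator of~\eqref{eq8888.055}.
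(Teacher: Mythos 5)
Your proposal is correct and matches the paper's intent exactly: the corollary is presented as the direct specialization of Theorem~\ref{thm8888.002} to $d=1$, $e=2$, $B=0$, with admissibility of $(u,2)$ following from coprimality and squarefreeness, and the paper supplies no separate argument beyond this. Your additional remarks on translating $\ord_pu=p-1$ and $\ord_p2=(p-1)/2$ into the primitive-root and quadratic-residue language are accurate and consistent with the paper's discussion.
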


The unconditional number of primes with simultaneous admissible triple of rational numbers $a,b,c\in \Q-\{-1,0,1\}$ of equal multiplicative orders 
\begin{equation}\label{eq8888.060}
\ord_p a =\ord_p b= \ord_p c,
\end{equation}
such that $\ord_p a =(p-1)/d$, with $d\ll (\log x)^B$, and $B\geq 0$, has almost identical analysis as the proof of Theorem \ref{thm8888.002}. In fact, any permutation of equality or inequality between the multiplicative orders can be produced by selecting any small fixed indices $d,e,f\geq 1$ to prescribe the multiplicative orders $\ord_pa=(p-1)/d$, $\ord_pb=(p-1)/e$, and $\ord_pc=(p-1)/f$ respectively. \\

Some of the foundation works on the calculations of the implied constants in \eqref{eq8888.010} appear in \cite{PS09}, \cite{FM18}, \cite{JO19}, et alii. The proof of Theorem \ref{thm8888.002} appears in Section \ref{S3388}, this result is completely unconditional. The other sections cover foundational and auxiliary materials.

\section{Divisors Free Characteristic Function}\label{S3000}

\begin{dfn} \label{dfn3000.005}{\normalfont
The \textit{multiplicative order} of an element in the cyclic group $\mathbb{F}_p^\times$ is defined by $\ord_p (v)=\min\{k:v^k\equiv 1 \bmod p\}$. \textit{Primitive} elements in this cyclic group have order $p-1=\#G$.
}
\end{dfn}
\begin{dfn}\label{dfn3000.010} {\normalfont Let $p\geq 2$ be a prime, and let $\F_p$ be the prime field of characteristic $p$. If $d\mid p-1$ is a small divisor, the multiplicative subgroup of $d$-powers, and the subgroup \textit{index} are defined by $\F_p^d=\{\alpha^d: \alpha\in \F_p^{\times}\}$, and $[\F_p^{\times}:\F_p^d]=d$ respectively.   
}
\end{dfn}
\begin{dfn}\label{dfn3000.015} {\normalfont Fix an integer $d\geq1$, and a rational number $u \in\Q^{\times}$. An element $u\in \F_p$ has \textit{index} $d=\ind_pu$ modulo a prime $p$ if and only if $\ind_pu=(p-1)/\ord_pu$. In particular, if $d\mid p-1$, then the prime finite field $\F_p$ contains $\varphi(d)$ elements of index $d\geq1$. 
}
\end{dfn}

Each element $u\in \F_p$ of index $d$ is a $d$-power, but not conversely. 
A new \textit{divisor-free} representation of the characteristic function of primitive element is introduced here. This representation can overcomes some of the limitations of the standard \textit{divisor-dependent} representation of the characteristic function
\begin{eqnarray}\label{eq3000.005}
\Psi (u)&=&\frac{\varphi (p-1)}{p-1}\sum _{d \mid p-1} \frac{\mu (d)}{\varphi (d)}\sum _{\ord(\chi ) = d} \chi (u)\\
&=&
\left \{\begin{array}{ll}
1 & \text{ if } \ord_p (u)=p-1,  \\
0 & \text{ if } \ord_p (u)\neq p-1, \\
\end{array} \right .\nonumber
\end{eqnarray}
of primitive roots. The works in \cite{DH37}, and \cite{WR01} attribute this formula to Vinogradov. The proof and other details on this representation of the characteristic function of primitive roots are given in \cite[p. 863]{ES57}, \cite[p.\ 258]{LN97}, \cite[p.\ 18]{MP07}. Equation \eqref{eq3000.005} detects the multiplicative order \(\ord_p (u)\) of the element \(u\in \mathbb{F}_p\) by means of the divisors of the totient \(p-1\). In contrast, the \textit{divisors-free} representation of the characteristic function in \eqref{eq3000.105} detects the multiplicative order \(\ord_p(u) \geq 1\) of the element \(u\in \mathbb{F}_p\) by means of the solutions of the equation \(\tau ^n-u=0\) in \(\mathbb{F}_p\), where \(u,\tau\) are constants, and \(1\leq n<p-1, \gcd (n,p-1)=1,\) is a variable. 

\begin{lem} \label{lem3000.103}
Let \(p\geq 2\) be a prime, and let \(\tau\) be a primitive root mod \(p\). Let $\psi (z)=e^{i 2\pi z/p }\neq 1$ be a nonprincipal additive character of order \(\ord \psi =p\). If \(u\in\mathbb{F}_p\) is a nonzero element, then,
\begin{eqnarray}\label{eq3000.105}
\Psi (u)&=&\sum _{\gcd (n,p-1)=1} \frac{1}{p}\sum _{0\leq k\leq p-1} \psi \left ((\tau ^n-u)k\right)\\
&=&\left \{
\begin{array}{ll}
1 & \text{ if } \ord_p(u)=p-1,  \\
0 & \text{ if } \ord_p(u)\neq p-1. \\
\end{array} \right . \nonumber
\end{eqnarray}
\end{lem}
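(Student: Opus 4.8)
The plan is to evaluate the double sum directly, reducing it to a counting problem via the orthogonality relation for additive characters. First I would isolate the inner sum over $k$. Since $\psi(z)=e^{i2\pi z/p}$ is a nonprincipal additive character of $\F_p$, the orthogonality relation yields
\begin{equation*}
\frac{1}{p}\sum_{0\leq k\leq p-1}\psi\left((\tau^n-u)k\right)=
\begin{cases}
1 & \text{if } \tau^n\equiv u \bmod p,\\
0 & \text{if } \tau^n\not\equiv u \bmod p.
\end{cases}
\end{equation*}
This is the routine geometric-series computation: the sum equals $p$ when $\tau^n-u\equiv 0 \bmod p$, and it vanishes otherwise because $\sum_{k=0}^{p-1}\zeta^k=0$ for every nontrivial $p$th root of unity $\zeta$. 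Substituting this back collapses the characteristic function into a count,
\begin{equation*}
\Psi(u)=\#\left\{n:\gcd(n,p-1)=1,\ \tau^n\equiv u \bmod p\right\},
\end{equation*}
where $n$ runs over a reduced residue system modulo $p-1$ (equivalently $1\leq n\leq p-2$ with $\gcd(n,p-1)=1$).

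Next I would invoke the defining property of the primitive root $\tau$: the map $n\mapsto \tau^n$ is a bijection from $\Z/(p-1)\Z$ onto $\F_p^{\times}$. Hence for the fixed nonzero element $u$ there is a \emph{unique} residue $n_0 \bmod (p-1)$ with $\tau^{n_0}\equiv u \bmod p$. Moreover the order satisfies $\ord_p(u)=\ord_p(\tau^{n_0})=(p-1)/\gcd(n_0,p-1)$, so $u$ is a primitive root precisely when $\gcd(n_0,p-1)=1$.

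Finally I would combine the two observations to obtain the claimed dichotomy. If $\ord_p(u)=p-1$, then $\gcd(n_0,p-1)=1$, so the unique solution $n_0$ lies in the admissible range and is counted, giving $\Psi(u)=1$; if $\ord_p(u)\neq p-1$, then $\gcd(n_0,p-1)>1$, so $n_0$ is excluded from the range of summation and $\Psi(u)=0$. I do not anticipate a genuine obstacle here, since the argument is a clean two-step reduction — orthogonality of additive characters, followed by the primitive-root bijection. The only point requiring care is fixing the summation range for $n$ so that each nonzero residue $u$ corresponds to exactly one admissible index; this is what guarantees that the count is $0$ or $1$ rather than a larger multiplicity, and it is precisely why the divisor-free representation detects primitivity without any reference to the divisors of $p-1$.
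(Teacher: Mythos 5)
Your proposal is correct and follows essentially the same route as the paper's own proof: the inner sum over $k$ is the geometric-series/orthogonality computation detecting $\tau^n\equiv u \bmod p$, and the outer sum over $n$ coprime to $p-1$ ranges over the primitive roots, so the count is $1$ or $0$ according as $u$ is or is not a primitive root. Your version is slightly more careful in making explicit the bijection $n\mapsto\tau^n$ on a reduced residue system and the formula $\ord_p(\tau^{n_0})=(p-1)/\gcd(n_0,p-1)$, which the paper leaves implicit.
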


\begin{proof} As the index \(n\geq 1\) ranges over the integers relatively prime to \(p-1\), the element \(\tau ^n\in \mathbb{F}_p\) ranges over the primitive roots \(\text{mod } p\). Ergo, the equation
\begin{equation}\label{eq3000.107}
\tau ^n- u=0
\end{equation} has a solution if and only if the fixed element \(u\in \mathbb{F}_p\) is a primitive root. Next, replace \(\psi (z)=e^{i 2\pi  k z/p }\) to obtain
\begin{eqnarray}
\Psi(u)&=&\sum_{\gcd (n,p-1)=1} \frac{1}{p}\sum_{0\leq k\leq p-1} e^{i 2\pi  (\tau ^n-u)k/p }\\
&=&\left \{
\begin{array}{ll}
1 & \text{ if } \ord_p (u)=p-1,  \\
0 & \text{ if } \ord_p (u)\neq p-1. \\
\end{array} \right.\nonumber
\end{eqnarray}
This follows from the geometric series identity $\sum_{0\leq k\leq N-1} w^{ k }=(w^N-1)/(w-1)$ with $w \ne 1$, applied to the inner sum.
\end{proof}
Let $d\mid p-1$. A new representation of the indicator function for $d$-power \(v\in \mathbb{F}_p\) or elements of order $\ord_p(v)=(p-1)/d$ is consider below.
\begin{lem} \label{lem3000.114}
Let \(p\geq 2\) be a prime, and let \(\tau\) be a primitive root mod \(p\). Let $\psi (z)=e^{i 2\pi z/p }\neq 1$ be a nonprincipal additive character of order \(\ord \psi =p\). If \(u\in\mathbb{F}_p\) is a $d$-power, then,
\begin{eqnarray}\label{eq3000.117}
\Psi (u,d)&=&\sum _{\gcd (n,(p-1)/d)=1} \frac{1}{p}\sum _{0\leq k\leq (p-1)/d} \psi \left ((\tau ^{dn}-u)k\right)\\
&=&\left \{
\begin{array}{ll}
1 & \text{ if } \ord_p(u)=(p-1)/d,  \\
0 & \text{ if } \ord_p(u)\neq (p-1)/d. \\
\end{array} \right .\nonumber
\end{eqnarray}
\end{lem}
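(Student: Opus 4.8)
The plan is to mirror the proof of Lemma \ref{lem3000.103}, replacing the full cyclic group $\F_p^{\times}$ by its unique subgroup of index $d$. Since $d\mid p-1$ by hypothesis, set $m=(p-1)/d\in\Z$ and $g=\tau^{d}$. As $\tau$ is a primitive root, $g$ has order exactly $m$, so $H=\langle g\rangle$ is the unique subgroup of $\F_p^{\times}$ of order $m$. The first and essential step is the structural observation that the elements of $\F_p^{\times}$ of order exactly $(p-1)/d$ are precisely the generators of $H$, that is, the powers $g^{n}=\tau^{dn}$ with $\gcd(n,m)=1$; moreover, as $n$ runs through a reduced residue system modulo $m$, each such element is produced exactly once. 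Concretely one computes $\ord_p(\tau^{dn})=(p-1)/\gcd(dn,p-1)=m/\gcd(n,m)$, which equals $m$ if and only if $\gcd(n,m)=1$.

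Next I would evaluate the inner sum by orthogonality, exactly as in Lemma \ref{lem3000.103}. Writing $c=\tau^{dn}-u\in\F_p$ and using the geometric series identity $\sum_{0\le k\le p-1}w^{k}=(w^{p}-1)/(w-1)$ with $w=\psi(c)$, one obtains
\begin{equation}
\frac{1}{p}\sum_{0\le k\le p-1}\psi\bigl((\tau^{dn}-u)k\bigr)=
\begin{cases}
1 & \text{if } \tau^{dn}=u \text{ in } \F_p,\\
0 & \text{if } \tau^{dn}\neq u,
\end{cases}
\end{equation}
since $\psi(c)=1$ precisely when $c\equiv 0\bmod p$. I note that this orthogonality relation is what forces the inner summation to run over a complete residue system modulo $p$.

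Combining the two steps, the double sum collapses to
\begin{equation}
\Psi(u,d)=\#\{\, n \bmod m : \gcd(n,m)=1,\ \tau^{dn}=u \,\}.
\end{equation}
By the first step this cardinality is $1$ when $u$ is a generator of $H$, i.e.\ when $\ord_p(u)=(p-1)/d$, and $0$ otherwise, which is the claim. The hard part, and the only place requiring genuine care, is the counting in the first step: one must verify both that every element of order $(p-1)/d$ arises as some $\tau^{dn}$ with $\gcd(n,m)=1$ and that the representative $n$ is unique modulo $m$, so that no element is over- or under-counted. The uniqueness follows because $g$ has order $m$, whence $\tau^{dn}=\tau^{dn'}$ forces $n\equiv n'\bmod m$; once this bijection between reduced residues modulo $m$ and generators of $H$ is secured, the remainder is the same orthogonality bookkeeping as in Lemma \ref{lem3000.103}.
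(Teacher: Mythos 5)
Your proof is correct and follows the same route the paper intends: the paper's own proof of this lemma is literally ``similar to the proof of Lemma \ref{lem3000.103}, mutatis mutandis,'' and your two steps --- parametrizing the elements of order $(p-1)/d$ as $\tau^{dn}$ with $\gcd(n,(p-1)/d)=1$ via $\ord_p(\tau^{dn})=m/\gcd(n,m)$, then collapsing the inner sum by additive-character orthogonality --- are exactly the required mutations, carried out with more care than the paper supplies. You are also right to observe that orthogonality needs the inner sum to run over a complete residue system $0\leq k\leq p-1$; the upper limit $(p-1)/d$ printed in \eqref{eq3000.117} is a defect of the statement as written, not of your argument.
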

\begin{proof}Similar to the proof of Lemma \ref{lem3000.103}, mutatis mutandis.
\end{proof}

\section{Finite Summation Kernels} \label{S3366}
Let $f: \C \longrightarrow \C$ be a function, and let $q \in \N$ be a large integer. The finite Fourier transform 
\begin{equation} \label{eq3366.102}
\hat{f}(t)=\frac{1}{q} \sum_{0 \leq s\leq q-1} e^{i \pi st/q}
\end{equation}
and its inverse are used here to derive a summation kernel function, which is almost identical to the Dirichlet kernel. 

\begin{dfn} \label{dfn3366.104} {\normalfont Let $ p$ and $ q $ be primes, and let $\omega=e^{i 2 \pi/q}$, and $\zeta=e^{i 2 \pi/p}$ be roots of unity. The \textit{finite summation kernel} is defined by the finite Fourier transform identity
\begin{equation} \label{eq3366.104}
\mathcal{K}(f(n))=\frac{1}{q} \sum_{0 \leq t\leq q-1,}  \sum_{0 \leq s\leq p-1} \omega^{t(n-s)}f(s)=f(n).
\end{equation}
} 
\end{dfn}
This simple identity is very effective in computing upper bounds of some exponential sums
\begin{equation}\label{eq3366.106}
 \sum_{ n \leq x}  f(n)= \sum_{ n \leq x}  \mathcal{K}(f(n)),
\end{equation}
where $x  \leq p < q$. This technique generalizes the sum of resolvents method used in \cite{ML72}. Here, it is reformulated as a finite Fourier transform method, which is applicable to a wide range of functions.

\begin{lem}   \label{lem3366.100}  Let \(p\geq 2\) and $q=p+o(p)$ be large primes. Let $\omega=e^{i2 \pi/q} $ be a $q$th root of unity, and let $t \in [1, p-1]$. Then,
\begin{enumerate} [font=\normalfont, label=(\roman*)]
\item $\displaystyle  
	 \sum_{n \leq p-1} \omega^{tn} = \frac{\omega^{t}-\omega^{tp}}{1-\omega^{t}},
$
\item $\displaystyle
	\left |  \sum_{n \leq p-1} \omega^{tn}  \right |\leq \frac{2q }{\pi t}.
$ 
 
\end{enumerate}
\end{lem}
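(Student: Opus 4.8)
The plan is to treat (i) as a finite geometric series and then obtain (ii) by taking absolute values in the closed form and estimating the resulting ratio of unit-modulus quantities.

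For (i), the first point is that the common ratio $\omega^t$ is never equal to $1$: since $\omega=e^{i2\pi/q}$ has exact order $q$ and $1\le t\le p-1<q$ (recall from Section \ref{S3366} that $q=p+o(p)$ is a prime with $p<q$), we have $q\nmid t$, so $\omega^t\ne 1$ and the denominator $1-\omega^t$ does not vanish. I would then apply the elementary identity $\sum_{1\le n\le N}r^n=(r-r^{N+1})/(1-r)$ with $r=\omega^t$ and $N=p-1$, which gives $\sum_{n\le p-1}\omega^{tn}=(\omega^t-\omega^{tp})/(1-\omega^t)$ at once. This is the entire content of part (i).

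For (ii) I would start from the closed form just obtained and bound numerator and denominator separately. The numerator satisfies $|\omega^t-\omega^{tp}|\le|\omega^t|+|\omega^{tp}|=2$ by the triangle inequality, since every power of $\omega$ has modulus $1$. For the denominator the half-angle identity gives the exact value $|1-\omega^t|=|1-e^{i2\pi t/q}|=2\sin(\pi t/q)$, so the whole estimate reduces to a lower bound for this sine. Writing $\theta=\pi t/q$, I would invoke the elementary inequality $\sin\theta\ge \tfrac{2}{\pi}\theta$ (Jordan's inequality, i.e.\ the concavity of $\sin$ on $[0,\pi/2]$), which yields $|1-\omega^t|\ge \tfrac{4t}{q}$. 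Combining the two estimates gives $\big|\sum_{n\le p-1}\omega^{tn}\big|\le 2\big/(4t/q)=q/(2t)\le 2q/(\pi t)$, the last step because $\pi<4$.

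The only substantive step — and the place to be careful — is the lower bound on $|1-\omega^t|$, equivalently keeping the root of unity $\omega^t$ bounded away from $1$; this is precisely the Dirichlet-kernel estimate. Jordan's inequality applies cleanly when $\theta=\pi t/q\le \pi/2$, i.e.\ $t\le q/2$, which is the regime relevant to the summation-kernel application. The delicate behaviour is that $\sin(\pi t/q)$ becomes small as $t$ approaches $q$, so the genuinely sharp control is by $\min(t,q-t)$ through the symmetry $\sin(\pi t/q)=\sin\big(\pi(q-t)/q\big)$. I expect this control of the denominator near the endpoint to be the only real obstacle of the lemma, everything else reducing to the routine geometric-series computation.
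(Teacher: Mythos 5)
Your argument is correct and follows essentially the same route as the paper: part (i) is the finite geometric series with ratio $\omega^t\ne 1$ (guaranteed by $1\le t\le p-1<q$), and part (ii) bounds the closed form by $2/|1-\omega^t|=1/\sin(\pi t/q)$ and then applies a linear lower bound for the sine; the paper uses $\sin z\ge z/2$ on $(0,\pi/2)$ where you use Jordan's $\sin\theta\ge 2\theta/\pi$, a difference only in constants. The one place you go beyond the paper is in flagging the regime $t>q/2$: since $t$ may range up to $p-1$ and $q=p+o(p)$, the angle $\pi t/q$ can exceed $\pi/2$, the paper's stated sine inequality no longer applies, and the bound $2q/(\pi t)$ as literally stated can fail when $t$ is close to $q$; the honest estimate is $1/\sin(\pi t/q)\le q/(2\min(t,q-t))$, which is the standard Dirichlet-kernel bound and still yields the $\log q$ factor needed in the later summations over $t$. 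So your proof establishes the lemma exactly where the paper's does, and your closing remark correctly identifies the endpoint issue that the paper's proof passes over in silence.
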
 

\begin{proof} (i) Use the geometric series to compute this simple exponential sum as
	\begin{eqnarray} \label{eq3366.102}
	 \sum_{n \leq p-1} \omega^{tn}
	&=& \frac{\omega^{t}-\omega^{tp}}{1-\omega^{t}} \nonumber.
	\end{eqnarray} 
(ii) Observe that the parameters $q=p+o(p)$ is prime, $\omega=e^{i2 \pi/q}$, the integers $t \in [1, p-1]$, and $d \leq p-1<q-1$. This data implies that $\pi t/q\ne k \pi $ with $k \in \mathbb{Z}$, so the sine function $\sin(\pi t/q)\ne 0$ is well defined. Using standard manipulations, and $z/2 \leq \sin(z) <z$ for $0<|z|<\pi/2$, the last expression becomes
	\begin{equation}
	\left |\frac{\omega^{t}-\omega^{tp}}{1-\omega^{t}} \right |\leq 	\left | \frac{2}{\sin( \pi t/ q)} \right | 
	\leq \frac{2q}{\pi t}.
	\end{equation}
	
\end{proof}

\begin{lem}   \label{lem3366.200}  Let \(p\geq 2\) and $q=p+o(p)$ be large primes, and let $\omega=e^{i2 \pi/q} $ be a $q$th root of unity. Then,
\begin{enumerate} [font=\normalfont, label=(\roman*)]
\item $\displaystyle
	 \sum_{\gcd(n,(p-1)/d)=1} \omega^{tn} = \sum_{r \mid (p-1)/d} \mu(d) \frac{\omega^{rt}-\omega^{drt((p-1)/r(d+1))}}{1-\omega^{rdt}},
$
\item $\displaystyle
	\left | \sum_{\gcd(n,p-1)=1} \omega^{tn}  \right |\leq \frac{4q \log \log p}{\pi t},
$ 
 \end{enumerate}
where $\mu(k)$ is the Mobius function, for any fixed pair $d \mid p-1$ and $t \in [1, p-1]$.
\end{lem}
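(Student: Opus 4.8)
The plan is to establish the two parts in sequence: part (i) by Möbius inversion over the coprimality condition, and part (ii) from (i) by the triangle inequality together with a divisor-sum estimate. For (i), I would detect the condition $\gcd(n,(p-1)/d)=1$ by the Möbius function. Writing $m=(p-1)/d$ and using $\sum_{r\mid\gcd(n,m)}\mu(r)=1$ exactly when $\gcd(n,m)=1$, I would interchange the order of summation to get
\[
\sum_{\gcd(n,m)=1}\omega^{tn}=\sum_{r\mid m}\mu(r)\sum_{j=1}^{m/r}\omega^{rtj}.
\]
Each inner sum is a finite geometric series of ratio $\omega^{rt}$ (here $q$ is prime with $r,t<q$, hence $q\nmid rt$ and $\omega^{rt}\neq1$), so Lemma \ref{lem3366.100}(i) applies verbatim after replacing the frequency $t$ by $rt$ and the length $p-1$ by $m/r$. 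Collecting the closed forms gives the stated identity with $\mu(r)$ in place of the summation index, the discrepancy in the exponents being merely the rewriting $rt(m/r+1)=t((p-1)/d+r)$.

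For (ii), I would specialise (i) to $d=1$, so that $m=p-1$, and apply the triangle inequality:
\[
\Bigl|\sum_{\gcd(n,p-1)=1}\omega^{tn}\Bigr|\le\sum_{r\mid p-1}|\mu(r)|\,\Bigl|\sum_{j\le(p-1)/r}\omega^{rtj}\Bigr|.
\]
The natural estimate for each inner sum is the sine bound underlying Lemma \ref{lem3366.100}(ii), namely $\bigl|\sum_j\omega^{rtj}\bigr|\le 1/|\sin(\pi rt/q)|$. If this is comparable to $\tfrac{2q}{\pi\,rt}$, then summing over $r$ produces the factor
\[
\sum_{r\mid p-1}\frac{|\mu(r)|}{r}=\prod_{\ell\mid p-1}\Bigl(1+\tfrac1\ell\Bigr)\le\frac{\sigma(p-1)}{p-1}\ll\log\log p
\]
by the Gronwall/Mertens bound, whose implied constant is $e^{\gamma}<2$ for large $p$; the product with $\tfrac{2q}{\pi t}$ then stays below $\tfrac{4q\log\log p}{\pi t}$, as claimed.

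The main obstacle is exactly the step asserting $\bigl|\sum_j\omega^{rtj}\bigr|\ll\tfrac{q}{rt}$. Lemma \ref{lem3366.100}(ii) is stated for frequencies in $[1,p-1]$, whereas here the effective frequency $rt$ can exceed $q$, so $1/|\sin(\pi rt/q)|$ is governed not by $rt$ but by $q\|rt/q\|$, the distance of $rt$ to the nearest multiple of $q$. The delicate part will therefore be to replace $rt$ by its least absolute residue modulo $q$ and to show that these residues do not cluster near $0$ as $r$ ranges over the divisors of $p-1$, so that $\sum_{r\mid p-1}1/\|rt/q\|$ is still $\ll(p-1)\log\log p/t$. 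Here I expect to exploit the freedom in the choice of the auxiliary prime $q=p+o(p)$, which by the prime number theorem may be selected to avoid the bad resonances, thereby reducing the estimate to the clean divisor sum above.
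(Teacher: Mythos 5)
Your part (i) is exactly the paper's argument: detect the coprimality condition with $\sum_{r\mid\gcd(n,m)}\mu(r)$, interchange summation, and close each inner geometric series via Lemma \ref{lem3366.100}(i); the cosmetic rewriting of the exponent is harmless. For part (ii), however, your proposal stalls at precisely the point where a proof is needed, and you are right that this is the crux. The inner sum $\sum_{j\le (p-1)/r}\omega^{rtj}$ is controlled by $1/|\sin(\pi rt/q)|$, hence by a quantity of size $q\,\|rt/q\|^{-1}$ with $\|\cdot\|$ the distance to the nearest integer, and this is comparable to $q/(rt)$ only when $rt$ does not wrap past a multiple of $q$. Since $r$ can be as large as $p-1$ and $t$ as large as $p-1$, the product $rt$ can be of order $q^{2}$, and a single divisor $r$ with $rt$ near a multiple of $q$ contributes up to $(p-1)/r$ in absolute value, which overwhelms the target bound $4q\log\log p/(\pi t)$ once $t$ is large. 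Your suggested remedy, choosing the auxiliary prime $q=p+o(p)$ to dodge these resonances, is only sketched and cannot work uniformly: the inequality is asserted for every $t\in[1,p-1]$ with a single fixed $q$, and for any such $q$ a counting argument shows that many pairs $(r,t)$ with $r\mid p-1$ have $\|rt/q\|$ small. So as written your part (ii) has a genuine gap.

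You should know, though, that the paper's own proof of (ii) commits exactly the step you flagged: it writes $|2/\sin(\pi dt/q)|\le 2q/(\pi dt)$ for all $1\le d\le p-1$ and $t\in[1,p-1]$ with no attention to the reduction of $dt$ modulo $q$, and then sums $\sum_{d\mid p-1}d^{-1}\le 2\log\log p$ just as you do. In other words, the difficulty you isolate is real and is not resolved in the paper either; your write-up is more honest about it but does not close it. To make part (ii) rigorous one would need either to restrict the range of the parameters so that $rt<q$ always holds, or to prove a genuine bound on $\sum_{r\mid p-1}\min\bigl((p-1)/r,\ \|rt/q\|^{-1}\bigr)$, neither of which is done here.
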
 

\begin{proof} (i) Use the inclusion exclusion principle to rewrite the exponential sum as
	\begin{eqnarray} \label{eq3366.204}
	\sum_{\gcd(n,p-1)=1} \omega^{tn}&=& \sum_{n \leq p-1} \omega^{tn}  \sum_{\substack{d \mid p-1 \\ d \mid n}}\mu(d)  \nonumber \\
	&=& \sum_{d \mid p-1} \mu(d) \sum_{\substack{n \leq p-1 \\ d \mid n}} \omega^{tn}\nonumber \\
	& =&\sum_{d\mid p-1} \mu(d) \sum_{m \leq (p-1)/ d} \omega^{dtm} \\
	&=& \sum_{d \mid p-1} \mu(d) \frac{\omega^{dt}-\omega^{dt((p-1)/d+1)}}{1-\omega^{dt}} \nonumber.
	\end{eqnarray} 
(ii) Observe that the parameters $q=p+o(p)$ is prime, $\omega=e^{i2 \pi/q}$, the integers $t \in [1, p-1]$, and $d \leq p-1<q-1$. This data implies that $\pi dt/q\ne k \pi $ with $k \in \mathbb{Z}$, so the sine function $\sin(\pi dt/q)\ne 0$ is well defined. Using standard manipulations, and $z/2 \leq \sin(z) <z$ for $0<|z|<\pi/2$, the last expression becomes
	\begin{equation}
	\left |\frac{\omega^{dt}-\omega^{dtp}}{1-\omega^{dt}} \right |\leq 	\left | \frac{2}{\sin( \pi dt/ q)} \right | 
	\leq \frac{2q}{\pi dt}
	\end{equation}
	for $1 \leq d \leq p-1$. Finally, the upper bound is
	\begin{eqnarray}
	\left|   \sum_{d \mid p-1} \mu(d) \frac{\omega^{dt}-\omega^{dt((p-1)/d+1)}}{1-\omega^{dt}} \right| 
	&\leq&\frac{2q}{\pi t} \sum_{d \mid p-1} \frac{1}{d} \\
	&\leq& \frac{4q \log \log p}{\pi t} \nonumber.
	\end{eqnarray}
The last inequality uses the elementary estimate $ \sum_{d \mid n} d^{-1} \leq 2 \log \log n$.
\end{proof}

\section{Gaussian Sums, And Weil Sums} \label{S4466}
\begin{thm}   {\normalfont (Gauss sums)} \label{thm4466.400}  Let $p\geq 2$ and $q \geq 2$ be large primes. Let $\tau$ be a primitive root modulo $p$. If $\chi(t)=e^{i2 \pi t/q} $ and  $\psi(t)=e^{i2\pi  \tau^t/p}$ are a pair of characters, then, the Gaussian sum has the upper bound
	\begin{equation} \label{eq4466.400}
	\left |\sum_{1 \leq t \leq q-1}    \chi(t) \psi(t) \right | \leq 2 q^{1/2} \log q.
	\end{equation}

\end{thm}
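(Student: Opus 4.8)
The plan is to treat this as an incomplete mixed exponential sum in the two moduli $p$ and $q$, and to reduce it to classical Gauss sums by expanding the $(p-1)$-periodic factor into multiplicative characters of $\F_p^\times$. Write $S=\sum_{1\le t\le q-1}e^{i2\pi t/q}\,e^{i2\pi\tau^t/p}$. As a function of $t$, the factor $e^{i2\pi\tau^t/p}$ has period $\ord_p\tau=p-1$, so I would expand it in the additive characters modulo $p-1$:
\begin{equation}
e^{i2\pi\tau^t/p}=\sum_{0\le a\le p-2}c_a\,e^{i2\pi at/(p-1)},\qquad c_a=\frac{1}{p-1}\sum_{s\in\F_p^\times}e^{i2\pi s/p}\,\overline{\chi_a(s)},
\end{equation}
where $\chi_a$ is the multiplicative character of $\F_p^\times$ determined by $\chi_a(\tau)=e^{i2\pi a/(p-1)}$ (the coefficient is computed after the substitution $s=\tau^t$, $t=\ind_p s$, and the orthogonality $\sum_a\chi_a(\tau^t)\overline{\chi_a(s)}=(p-1)[s=\tau^t]$ recovers the left-hand side). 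Thus each $c_a$ is a normalized Gauss sum: for $a\not\equiv0$ the character $\chi_a$ is nontrivial and the classical evaluation gives $|c_a|=\sqrt p/(p-1)$, while for $a=0$ the trivial character yields $c_0=-1/(p-1)$. This Gauss-sum modulus $\sqrt p$ is the one genuinely deep input.

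Next I would interchange the order of summation to obtain $S=\sum_{0\le a\le p-2}c_a\,K(a)$, where
\begin{equation}
K(a)=\sum_{1\le t\le q-1}e^{i2\pi t\left(\frac1q+\frac{a}{p-1}\right)}
\end{equation}
is a geometric summation kernel of exactly the type treated in Section \ref{S3366}. Setting $\theta_a=\frac1q+\frac{a}{p-1}$, the geometric-series and sine estimate underlying Lemma \ref{lem3366.100} give $|K(a)|\le 1/\lvert\sin\pi\theta_a\rvert\le 1/(2\lVert\theta_a\rVert)$, where $\lVert\cdot\rVert$ denotes the distance to the nearest integer. One must first check that $\theta_a\notin\Z$ for every $a$ in range: since $q(p-1)\theta_a=(p-1)+aq\equiv p-1\not\equiv0\pmod q$ because $0<p-1<q$ (using $q=p+o(p)$, so $q\nmid p-1$), the kernel is always well defined, and in particular $K(0)=-1$.

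Finally I would assemble the bound. Using $|c_a|\le\sqrt p/(p-1)$ for $a\ne0$ together with the estimate for the reciprocal-distance sum over the $p-1$ nearly equally spaced points $\theta_a$,
\begin{equation}
\sum_{1\le a\le p-2}\frac{1}{\lVert\theta_a\rVert}\ll (p-1)\log p,
\end{equation}
one gets $|S|\le\frac{\sqrt p}{p-1}\sum_{a\ne0}|K(a)|+|c_0K(0)|\ll\sqrt p\,\log p$. Since $q=p+o(p)$ gives $\sqrt p\sim\sqrt q$ and $\log p\sim\log q$, this yields $|S|\le 2q^{1/2}\log q$ for $p,q$ large. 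I expect the main obstacle to be purely quantitative rather than conceptual: sharpening the reciprocal-distance estimate and absorbing the $o(p)$ discrepancy between $p$ and $q$ so that the implied constant is provably at most $2$, not merely $O(1)$. The structural steps — the character expansion, the Gauss-sum evaluation of modulus $\sqrt p$, and the geometric kernel bound — are all standard or already available in the earlier sections.
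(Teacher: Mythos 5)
The paper states this theorem without any proof (and without a citation), so there is no in-paper argument to compare yours against. Your completion method --- expanding $\psi(t)=e^{i2\pi\tau^{t}/p}$ in additive characters modulo $p-1$, recognizing the Fourier coefficients as classical Gauss sums of modulus $\sqrt{p}/(p-1)$, and bounding the resulting geometric kernels --- is the natural route, and it meshes with the kernel machinery of Section \ref{S3366}.

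There is, however, one step that fails as written. The estimate $\sum_{1\le a\le p-2}\|\theta_a\|^{-1}\ll(p-1)\log p$ is not true in general: writing $a=p-1-b$, one has $\|\theta_a\|=\left|b/(p-1)-1/q\right|$, and for $b=1$ this equals $(q-p+1)/(q(p-1))$, which for $q=p+2$ is of size $3/p^{2}$; that single near-integer point already contributes $\asymp p^{2}$ to the reciprocal-distance sum, and hence $\asymp p^{3/2}$ to your bound for $|S|$ if inserted naively --- far worse than the claimed $q^{1/2}\log q$. The repair is standard but must be made explicit: use $|K(a)|\le\min\left(q-1,\;1/(2\|\theta_a\|)\right)$, note that the $p-1$ points $\theta_a$ are spaced $1/(p-1)$ apart so at most $O(1)$ of them lie within $1/(2(p-1))$ of an integer, let those contribute the trivial $O(q)=O(p)$, and sum the harmonic tail over the remaining $a$ to get $\sum_a|K(a)|\ll p\log p$; this restores $|S|\ll p^{1/2}\log p$. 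Two further caveats: your argument needs $q\asymp p$ (as in every application in the paper, where $q=p+o(p)$), since for $q$ much smaller than $p$ the final comparison $p^{1/2}\log p\le 2q^{1/2}\log q$ is false and the theorem as literally stated becomes doubtful; and, as you acknowledge, obtaining the explicit constant $2$ rather than an unspecified $O(1)$ requires carrying explicit constants through the harmonic sum. Neither of these is a conceptual obstacle, but the reciprocal-distance step must be corrected before the proof is complete.
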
 
\begin{thm}   {\normalfont (Weil sums)} \label{thm4466.420}  Let $p\geq 2$ and $q \geq 2$ be large primes, and let $f(t)$ be a powerfree polynomial of degree $\deg f=d\geq 1$. If $\chi(t)=e^{i2 \pi t/q} $ and  $\psi(t)=e^{i2\pi  f(t)/p}$ are a pair of characters. Then, the Weil sum has the upper bound
	\begin{equation} \label{eq4466.430}
	\left |\sum_{1 \leq t \leq q-1}    \chi(t) \psi(f(t)) \right | \leq 2 d q^{1/2} \log q.
	\end{equation}

\end{thm}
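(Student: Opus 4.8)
The plan is to bound the Weil sum
\begin{equation*}
S=\sum_{1\leq t\leq q-1}\chi(t)\psi(f(t)),
\end{equation*}
where $\chi(t)=e^{i2\pi t/q}$ is an additive character modulo $q$ and $\psi(f(t))=e^{i2\pi f(t)/p}$ is an additive character modulo $p$ composed with the powerfree polynomial $f$ of degree $d$. The core estimate available to us is the Weil bound for a single complete exponential sum over $\F_p$: for a nonconstant polynomial $g$ of degree $d$ with $\gcd(d,p)=1$ (guaranteed here since $f$ is powerfree and $p$ is large), one has $\bigl|\sum_{x\in\F_p}\psi(g(x))\bigr|\leq (d-1)p^{1/2}$. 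The difficulty is that the summation here runs over $t$ modulo $q$, not modulo $p$, and the character $\chi(t)$ is a $q$th-root-of-unity factor that does not combine with the $\psi$-character into a single complete sum over one modulus. So the first move is to separate the two moduli.

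First I would introduce the finite summation kernel $\mathcal{K}$ from Definition \ref{dfn3366.104} to detect the residue class of $f(t)$ modulo $p$, writing $\psi(f(t))$ through its Fourier expansion and interchanging the order of summation. The effect is to replace the mixed sum by an average, over a parameter $s$ modulo $p$, of products of two independent one-variable exponential sums: one complete sum $\sum_{x\bmod p}\psi(f(x)-sx)$ of Weil type, and one geometric-type sum $\sum_{t\leq q-1}\chi(t)\omega^{-st}$ of the kind estimated in Lemma \ref{lem3366.100}. The Weil bound controls the first factor by $(d-1)p^{1/2}=O(d\,q^{1/2})$ uniformly in $s$ (using $q=p+o(p)$), and the kernel identity collapses the averaging over $s$ against the geometric factor.

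The key steps in order: (1) expand $\psi(f(t))$ using $\mathcal{K}$ to introduce the auxiliary modulus-$p$ variable; (2) interchange summations so that the inner sum over $t$ is a pure exponential sum in $\chi$ and $\omega$, bounded by Lemma \ref{lem3366.100}(ii) by $2q/(\pi t)$-type quantities, while the orthogonal sum over the lifted variable is a genuine complete Weil sum over $\F_p$; (3) apply the Weil bound $(d-1)p^{1/2}$ to that complete sum; (4) combine the two estimates and sum the resulting harmonic-type series, which contributes the $\log q$ factor; (5) absorb constants to reach $\bigl|S\bigr|\leq 2dq^{1/2}\log q$. The $\log q$ is exactly the cost of summing $1/t$ over $t\leq q-1$, mirroring the analogous logarithm in Theorem \ref{thm4466.400}, the pure Gauss-sum case, which is precisely this argument with $f(t)=\tau^t$ replaced by a polynomial.

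The main obstacle I anticipate is the modulus mismatch between $q$ and $p$: because $\chi$ lives modulo $q$ and $\psi\circ f$ lives modulo $p$, one cannot directly invoke a single Weil bound, and a naive completion to a full sum modulo $q$ would destroy the polynomial structure needed for Weil. The kernel-averaging device is what reconciles this, but care is required that the degree condition $\gcd(\deg f,p)=1$ and the powerfree hypothesis are genuinely used to license the Weil estimate on the complete inner sum, and that the error from the incomplete range $1\leq t\leq q-1$ (as opposed to a complete residue system) is handled by the $2q/(\pi t)$ decay rather than trivially. A secondary check is that the constant $2d$ rather than $d-1$ emerges after absorbing the kernel normalization and the geometric-sum constant $2/\pi$ against the $\log q$; this is routine but should be tracked so the stated constant is honest.
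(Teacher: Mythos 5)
First, be aware that the paper contains no proof of Theorem \ref{thm4466.420}: it is stated as a named result (``Weil sums'') with no argument and no citation, so there is no in-paper proof to measure your proposal against. Judged on its own terms, your plan has a genuine gap at its central step. Applying the kernel of Definition \ref{dfn3366.104} to $\psi(f(\cdot))$ and interchanging summations produces, for each frequency $t$, the inner factor
\begin{equation*}
\sum_{0\leq s\leq p-1}\omega^{-ts}\,e^{i2\pi f(s)/p},\qquad \omega=e^{i2\pi/q},
\end{equation*}
in which the twist $\omega^{-ts}$ is a $q$-th root of unity, not a $p$-th root of unity. This is \emph{not} the complete Weil sum $\sum_{x\bmod p}e^{i2\pi(f(x)-sx)/p}$ that you propose to bound by $(d-1)p^{1/2}$; it is another hybrid sum mixing the moduli $p$ and $q$, of exactly the same nature and difficulty as the sum $S$ you started with, so step (3) of your outline is not licensed. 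The modulus mismatch you correctly identify as the main obstacle is not resolved by the kernel --- it is only relocated into the inner factor. (The same circularity occurs in the paper's Section 5, which bounds such inner factors by citing Theorem \ref{thm4466.430}, which in turn reduces to the unproved Theorem \ref{thm4466.420}.) A genuine separation requires expanding $\psi(f(t))$ over additive characters modulo $p$, namely $\psi(f(t))=\frac{1}{p}\sum_{s\bmod p}\bigl(\sum_{x\bmod p}e^{i2\pi(f(x)-sx)/p}\bigr)e^{i2\pi st/p}$; then Weil does apply to the $x$-sum when $d\geq 2$, but the conjugate factor becomes $\sum_{1\leq t\leq q-1}e^{i2\pi t(1/q+s/p)}$, a geometric series to the composite modulus $pq$ that is not covered by Lemma \ref{lem3366.100}(ii) and whose summation over $s$ needs a spacing argument for the points $1/q+s/p$ near the integers.

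Second, even after that repair the stated inequality is false for $d=1$, so no proof can succeed at the stated level of generality. Take $f(t)=t$. Then
\begin{equation*}
\sum_{1\leq t\leq q-1}\chi(t)\psi(f(t))=\sum_{1\leq t\leq q-1}e^{i2\pi t(p+q)/(pq)}
\end{equation*}
is a pure geometric series whose modulus a short computation shows to be asymptotic to $(q-p-1)/2$ when $q=p+o(p)$. This exceeds $2q^{1/2}\log q$ whenever the gap $q-p$ is larger than roughly $4q^{1/2}\log q$, which the hypotheses do not exclude (and cannot be guaranteed unconditionally even for consecutive primes). In the completed form this failure is visible at the single frequency $s$ for which $f(x)-sx$ is constant, where the $x$-sum equals $p$ rather than $(d-1)p^{1/2}$. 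Any correct version of the theorem therefore needs $d\geq 2$ (with the constant re-derived from the spacing argument) or an additional hypothesis forcing $q-p=O(q^{1/2}\log q)$; your proposal, which treats $d\geq 1$ uniformly and relies on the kernel to reconcile the two moduli, cannot be completed as written.
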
 
\begin{thm}   \label{thm4466.430}  Let $p\geq 2$ and $q \geq 2$ be large primes. Let $\tau$ be a primitive root modulo $p$, and let $\kappa=\tau^d$ be an element of large multiplicative order. If $\chi(t)=e^{i2 \pi t/q} $ and  $\psi(t)=e^{i2\pi  t/p}$ are a pair of characters. Then, the exponential sum has the upper bound
	\begin{equation} \label{eq4466.430}
	\left |\sum_{1 \leq t \leq q-1}    \chi(t) \psi(\tau^{dt}) \right | \leq 2 d q^{1/2} \log q.
	\end{equation}

\end{thm}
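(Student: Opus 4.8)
The plan is to linearize the additive character $\psi$ over the multiplicative group $\F_p^{\times}$ and thereby reduce the mixed sum to standard Gauss sums. Writing $\kappa=\tau^d$, so that $\tau^{dt}=\kappa^t$, I would first dispose of the easy case $\gcd(d,p-1)=1$: here $\ord_p\kappa=(p-1)/\gcd(d,p-1)=p-1$, so $\kappa$ is itself a primitive root mod $p$, and $\sum_{1\le t\le q-1}\chi(t)\psi(\kappa^t)$ is literally a Gauss sum of the shape treated in Theorem \ref{thm4466.400} (with $\kappa$ in place of $\tau$), whence the bound $2q^{1/2}\log q\le 2dq^{1/2}\log q$ is immediate. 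The substance is therefore the general case, in which $\kappa$ has order $m=(p-1)/g$ with $g=\gcd(d,p-1)$, and $\kappa^t$ runs through the subgroup $\langle\kappa\rangle$ of index $g$.

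For the general case the key step is the Gauss-sum inversion of the additive character. Using $\psi(x)=\frac{1}{p-1}\sum_{\chi}G(\chi)\,\overline{\chi}(x)$ for $x\in\F_p^{\times}$, where $G(\chi)=\sum_{y}\chi(y)\psi(y)$ and the sum runs over the multiplicative characters mod $p$, I would substitute $x=\kappa^t$ and interchange summation to obtain
\[
\sum_{1\le t\le q-1}\chi(t)\psi(\kappa^t)=\frac{1}{p-1}\sum_{\chi}G(\chi)\sum_{1\le t\le q-1}\bigl(e^{i2\pi/q}\,\overline{\chi}(\kappa)\bigr)^{t}.
\]
Since $\overline{\chi}(\kappa)$ is a root of unity, each inner sum is geometric and is controlled by the resolvent estimate of Lemma \ref{lem3366.100}, i.e.\ by $1/\lvert\sin\pi\theta_\chi\rvert$, where $e^{i2\pi\theta_\chi}=e^{i2\pi/q}\,\overline{\chi}(\kappa)$. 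Inserting the classical evaluation $\lvert G(\chi)\rvert=p^{1/2}$ for $\chi$ nonprincipal, and noting that the principal character contributes only $O(1/p)$, reduces everything to a single reciprocal-sine sum over the character group.

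The main obstacle is exactly this reciprocal-sine sum. Indexing the characters by $\chi(\tau)=e^{i2\pi j/(p-1)}$, one has $\overline{\chi}(\kappa)=e^{-i2\pi jd/(p-1)}$, so $\theta_\chi=\tfrac1q-\tfrac{\langle jd\rangle}{p-1}$ with $\langle jd\rangle\equiv jd \bmod (p-1)$. As $j$ ranges over a residue system the map $j\mapsto\langle jd\rangle$ is a homomorphism with image the multiples of $g$ and kernel of size $g$, so each multiple of $g$ is hit exactly $g$ times, and the sum collapses to
\[
\sum_{\chi}\frac{1}{\lvert\sin\pi\theta_\chi\rvert}=g\sum_{0\le k\le m-1}\frac{1}{\bigl\lvert\sin\pi\bigl(\tfrac1q-\tfrac km\bigr)\bigr\rvert}.
\]
I would bound the inner sum over the $m$ equally spaced nodes by the standard estimate $\ll m\log m$ together with the single near-diagonal term, the latter being at most $\ll q$ via $\lvert\sin\pi z\rvert\ge 2\delta$, where $\delta$ is the distance from $z$ to the nearest integer. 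The large multiplicative order of $\kappa$ keeps $g$ small and the nodes well spread; combining the multiplicity factor $g\le d$ with $\lvert G(\chi)\rvert=p^{1/2}$, $q=p+o(p)$, and $p-1=gm$ then yields $\ll dq^{1/2}\log q$, the factor $d$ arising precisely from the multiplicity $g=\gcd(d,p-1)$. This matches the degree-$d$ dependence of the Weil bound in Theorem \ref{thm4466.420}, through which the estimate could alternatively be routed; but the Gauss-sum expansion above is the most self-contained path.
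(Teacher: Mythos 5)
Your argument is correct, but it is not the route the paper takes. The paper's entire proof is a one-line reduction: substitute $z=\tau^t$ so that $\psi(\tau^{dt})=\psi(z^d)$ and invoke the Weil-sum bound of Theorem \ref{thm4466.420} for the polynomial $f(z)=z^d$, importing the factor $d$ from the degree. You instead expand the additive character over $\F_p^{\times}$ via Gauss-sum inversion, $\psi(x)=\frac{1}{p-1}\sum_{\chi}G(\chi)\overline{\chi}(x)$, reduce the $t$-sum to geometric series controlled by $1/\lvert\sin\pi\theta_\chi\rvert$, and extract the factor $d$ from the multiplicity $g=\gcd(d,p-1)$ of the map $j\mapsto jd\bmod (p-1)$ on the character group. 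The trade-off is real: the paper's reduction is shorter but leans on Theorem \ref{thm4466.420}, which is stated without proof, and the substitution is delicate to justify ($t$ runs up to $q-1>p-1$, so $z=\tau^t$ cycles through $\F_p^{\times}$ with wrap-around, and $\chi(t)=e^{i2\pi t/q}$ becomes a function of the discrete logarithm of $z$ rather than a character of $z$, so the resulting object is not literally the polynomial character sum of Theorem \ref{thm4466.420}). Your expansion is self-contained, needing only $\lvert G(\chi)\rvert=p^{1/2}$ and elementary geometric-sum estimates, and it makes the provenance of the factor $d$ transparent. Two small points you should make explicit when writing it up: the inner geometric sum is nondegenerate because $\tfrac1q-\tfrac{\langle jd\rangle}{p-1}\notin\Z$ (this uses $q\nmid p-1$, guaranteed by $q=p+o(p)$ large), and in the worst case $g=1$ the offset $1/q$ is comparable to the node spacing $1/m$, so one must check that only $O(1)$ nodes fall within $O(1/q)$ of an integer before applying the $\ll m\log m$ bound to the rest; both are routine and your sketch already gestures at them.
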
 
\begin{proof} Use the change of variable $z=\tau^t$ to rewrite the exponential sum as a Weil sum with a polynomial $f(z)=z^d$ of degree $d$.
\end{proof}

\section{Incomplete And Complete Exponential Sums}
Two applications of the generalizes the sum of resolvents method used in \cite{ML72}, and \cite{SR73}, to estimate exponential sums are illustrated here. The first application is a nonlinear counterpart of the Polya-Vinogradov inequality
\begin{equation}\label{eq3344.108}
 \sum_{ n \leq x}  \chi(n)\leq 2p^{1/2}\log p
\end{equation}
for nonprincipal character $\chi\ne1$ modulo $p$.

\begin{thm}  \label{thm3344.100} Let \(p\geq 2\) be a large prime, and let \(\kappa \in \mathbb{F}_p\) be an element of large multiplicative order $\ord_p(\kappa) \mid p-1$. Then, for any fixed integer $a \in [1, p-1]$, and $x\leq p-1$,
\begin{equation}\label{eq3344.200}
 \sum_{ n \leq x}  e^{i2\pi a \kappa^{n}/p} \ll p^{1/2}  \log^3 p.
\end{equation}
\end{thm}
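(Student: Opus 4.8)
The plan is to convert the incomplete sum over $n\le x$ into a weighted combination of complete exponential sums by means of the finite summation kernel of Definition \ref{dfn3366.104}. I would fix a prime $q=p+o(p)$ and set $\omega=e^{i2\pi/q}$. Since $x\le p-1<q-1$, the $q$th roots of unity separate the integers of $[1,p-1]$, so orthogonality of characters mod $q$ (the kernel identity) expresses the indicator of $\{n\le x\}$ as $\frac{1}{q}\sum_{0\le t\le q-1}\sum_{1\le s\le x}\omega^{t(n-s)}$. Substituting this into the target sum and interchanging the order of summation would give the decoupling
\begin{equation*}
\sum_{n\le x} e^{i2\pi a\kappa^{n}/p}=\frac{1}{q}\sum_{0\le t\le q-1}\Bigl(\sum_{1\le s\le x}\omega^{-ts}\Bigr)\Bigl(\sum_{1\le n\le p-1}\omega^{tn}e^{i2\pi a\kappa^{n}/p}\Bigr),
\end{equation*}
i.e. an outer \emph{kernel} factor (a geometric sum in $s$) multiplied by an inner \emph{complete} mixed exponential sum in $n$.

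For the outer factor I would invoke Lemma \ref{lem3366.100} (with the variable $s$ in place of $n$): parts (i)–(ii) evaluate the geometric sum and bound it by $\ll q/\min(t,q-t)$ for $1\le t\le q-1$, whence $\sum_{t}\,q/\min(t,q-t)\ll q\log q$. For the inner factor with $t\ne 0$ I would write $\kappa=\tau^{d}$ with $\tau$ a primitive root and $d=\ind_p\kappa$, so that $\kappa^{n}=\tau^{dn}$ and the inner sum becomes $\sum_{n}e^{i2\pi tn/q}\,e^{i2\pi a\tau^{dn}/p}$. This is precisely the mixed sum controlled by the Weil estimate of Theorem \ref{thm4466.430}, applied uniformly in the nonzero frequency $t\bmod q$ and the nonzero coefficient $a\bmod p$, giving $\ll d\,q^{1/2}\log q$.

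Assembling the two bounds, $\frac{1}{q}\cdot(q\log q)\cdot(d\,q^{1/2}\log q)\ll d\,q^{1/2}\log^{2}q$; since $q\asymp p$ and "large multiplicative order'' should be read as $\ord_p\kappa\gg (p-1)/\log p$, i.e. $d\ll\log p$, this is $\ll p^{1/2}\log^{3}p$, the asserted bound. The main obstacle is the inner complete sum. The delicate point is that it mixes two distinct moduli—the additive character $\omega^{tn}=e^{i2\pi tn/q}$ of modulus $q$ produced by the kernel, and the additive character $e^{i2\pi a\kappa^{n}/p}$ of modulus $p$—so one must verify that, after the substitution $z=\tau^{n}$, it genuinely reduces to a Weil sum of the monomial $f(z)=z^{d}$ to which Theorem \ref{thm4466.430} applies with a bound uniform in $t$ and $a$; one must also reconcile the summation range $n\in[1,p-1]$ (a full period of $\tau^{dn}$) with the range used in Theorem \ref{thm4466.430}. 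Finally, the frequency $t\equiv 0$ must be treated separately: there the inner factor is the Gauss-period sum $\sum_{n\le p-1}e^{i2\pi a\kappa^{n}/p}$ over the cyclic subgroup $\langle\kappa\rangle$, and since the outer weight $x/q<1$ and the large-order hypothesis keeps this within $O(p^{1/2}\log p)$, this term is absorbed into the main estimate.
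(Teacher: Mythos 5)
Your proposal is essentially the paper's own proof: both complete the incomplete sum with the finite Fourier kernel modulo the auxiliary prime $q$, separate the $t=0$ frequency, bound the complete mixed sum by the Weil estimate of Theorem \ref{thm4466.430} and the geometric kernel factor by Lemma \ref{lem3366.100}, and sum over $t$ to collect the $\log$ factors; the only difference is that you attach the range restriction to the kernel factor (sum over $s\le x$) and complete the $n$-sum, whereas the paper keeps $n\le x$ as the geometric factor and completes the $s$-sum, a transposition that changes nothing in the estimates. Your explicit reading of ``large multiplicative order'' as $d=\ind_p\kappa\ll\log p$ is precisely what the paper's bound $2p^{1/2}\log^{2}p$ for the Weil sum tacitly assumes, and your $q/\min(t,q-t)$ bound for the kernel factor is in fact the more careful version of Lemma \ref{lem3366.100}(ii).
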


\begin{proof} Let $q=p+o(p)$ be a large prime, and write $f(n)=e^{i 2 \pi a\tau^{dn} /p}$, where $\tau $ is a primitive root modulo $p$, and $\kappa=\tau^d$ has large multiplicative order modulo $p$ modulo $p$. Applying the finite summation kernel in Definition \ref{dfn3366.104}, yields
\begin{eqnarray} \label{eq3344.212}
R(d,x)&=& \sum_{ n \leq x}  e^{i2\pi a \tau^{dn}/p}\\
&=& \sum_{ n \leq x}\frac{1}{q} \sum_{0 \leq t\leq q-1,}  \sum_{1 \leq s\leq p-1} \omega^{t(n-s)}e^{i2\pi a \tau^{ds}/p} \nonumber.
\end{eqnarray}
Use the Weil sum upper bound, see Theorem \ref{thm4466.430}, to show that the value $t=0$ contributes
\begin{eqnarray} \label{eq3344.214}
\frac{1}{q} \sum_{ n \leq x,} \sum_{1 \leq s\leq p-1} e^{i2\pi a \tau^{ds}/p}
&=&\frac{x}{q} \sum_{1 \leq s\leq p-1} e^{i2\pi a \tau^{ds}/p}\\
&=&\frac{x}{q} \sum_{1 \leq z\leq p-1} e^{i2\pi a z^{d}/p}\nonumber\\
&\leq&\frac{x}{q} \cdot (2\log p)p^{1/2}  \nonumber\\
&\leq&\frac{4x\log p}{p^{1/2}} \nonumber,
\end{eqnarray}
where $1/q\leq 2/p$. Replacing \eqref{eq3344.214} into \eqref{eq3344.212}, and rearranging it, yield
\begin{eqnarray} \label{eq3344.216}
R(d,x)&=&\sum_{ n \leq x}  e^{i2\pi a \tau^{dn}/p}\\
&=&\frac{1}{q} \sum_{ n \leq x,} \sum_{1 \leq t\leq q-1,}  \sum_{1 \leq s\leq p-1} \omega^{t(n-s)}e^{i2\pi a \tau^{ds}/p}+O\left (\frac{4x\log p}{p^{1/2}}\right ) \nonumber\\
&=&\frac{1}{q}  \sum_{1 \leq t\leq q-1}  \left (\sum_{1 \leq s\leq p-1} \omega^{-ts}e^{i2\pi a \tau^{ds}/p} \right ) \left (\sum_{ n \leq x}\omega^{tn} \right )+O\left (\frac{4x\log p}{p^{1/2}}\right )\nonumber.
\end{eqnarray}
Taking absolute value, and applying Lemma \ref{lem3366.100} to the inner sum, and Theorem \ref{thm4466.430} to the middle sum, yield
\begin{eqnarray} \label{eq3344.218}
|R(d,x)|&=&\left | \sum_{ n \leq x}  e^{i2\pi a \tau^{dn}/p} \right |\\
&\leq&\frac{1}{q}  \sum_{1 \leq t\leq q-1} \left | \sum_{1 \leq s\leq p-1} \omega^{-ts}e^{i2\pi a \tau^{ds}/p} \right | \cdot  \left | \sum_{ n \leq x}\omega^{tn} \right |+ O\left (\frac{4x\log p}{p^{1/2}}\right )\nonumber \\
&\ll&\frac{1}{q}  \sum_{1 \leq t\leq q-1} \left ( 2p^{1/2} \log^2 p \right ) \cdot  \left ( \frac{2q}{\pi t} \right )+\frac{4x\log p}{p^{1/2}}\nonumber\\
&\ll& p^{1/2} \log^3 p+\frac{4x\log p}{p^{1/2}}\nonumber .
\end{eqnarray}
The last summation in (\ref{eq3344.218}) uses the estimate 
\begin{equation} \label{eq3344.220} 
\sum_{1 \leq t\leq q-1}\frac{1}{t}\ll \log q\ll \log p
\end{equation} 
since $q=p+o(p)$.
\end{proof}
This result is nontrivial for $x\geq p^{1/2+\delta}$, and elements of large multiplicative orders $\ord_p\kappa \geq  p^{1/2+\delta}$, where $\delta>0$. A similar upper bound for composite moduli $p=m$ is also proved in \cite[Equation (2.29)]{ML72}. \\

The second application is a complete exponential sum version of the previous one, but restricted to relatively prime arguments. 

\begin{thm}  \label{thm3344.500}  Let \(p\geq 2\) be a large prime, let $\tau $ be a primitive root modulo $p$, and let $\kappa=\tau^d$ be an element of large multiplicative order modulo $p$. Then,
\begin{equation}\label{eq3344.500}
	 \sum_{\gcd(n,(p-1)/d)=1} e^{i2\pi a \tau^{dn}/p} \ll  p^{1-\varepsilon} 
\end{equation} 
for any fixed integer $a \in [1, p-1]$, and any arbitrary small number $\varepsilon \in 
(0, 1/2)$. 
\end{thm}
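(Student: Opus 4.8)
The goal is to bound the complete exponential sum
\[
S(d) = \sum_{\gcd(n,(p-1)/d)=1} e^{i2\pi a \tau^{dn}/p}
\]
by $p^{1-\varepsilon}$. The plan is to mirror the proof of Theorem \ref{thm3344.100}, but now the outer variable $n$ runs over residues coprime to $(p-1)/d$ rather than over a short initial segment $n\leq x$, so the relevant trigonometric factor is the \emph{coprimality kernel} of Lemma \ref{lem3366.200} rather than the short-segment kernel of Lemma \ref{lem3366.100}. First I would introduce the auxiliary prime $q=p+o(p)$ and insert the finite summation kernel of Definition \ref{dfn3366.104}, writing the indicator that $n-s$ lands on a given residue as a $\frac{1}{q}\sum_{0\leq t\leq q-1}\omega^{t(n-s)}$ average. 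This separates the variables: the sum over $s$ becomes a Weil-type sum $\sum_{1\leq s\leq p-1}\omega^{-ts}e^{i2\pi a\tau^{ds}/p}$, while the sum over $n$ becomes $\sum_{\gcd(n,(p-1)/d)=1}\omega^{tn}$, the exact object estimated in Lemma \ref{lem3366.200}.

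Next I would isolate the $t=0$ term, which contributes $\frac{1}{q}\bigl(\sum_{\gcd(n,(p-1)/d)=1}1\bigr)\bigl(\sum_{1\leq s\leq p-1}e^{i2\pi a\tau^{ds}/p}\bigr)$. The first factor is $\varphi((p-1)/d)\ll p$, and the second factor, after the change of variable $z=\tau^{s}$ turning it into $\sum_z e^{i2\pi a z^{d}/p}$, is $O(dp^{1/2}\log p)$ by the Weil bound of Theorem \ref{thm4466.430}; dividing by $q\asymp p$ leaves a contribution of size $O(dp^{1/2}\log p)$, comfortably below $p^{1-\varepsilon}$ for fixed $d$. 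For the remaining range $1\leq t\leq q-1$ I would take absolute values, bound the inner Weil factor by $2dq^{1/2}\log q$ uniformly in $t$ via Theorem \ref{thm4466.430}, and bound the coprimality factor by $\left|\sum_{\gcd(n,p-1)=1}\omega^{tn}\right|\leq \frac{4q\log\log p}{\pi t}$ from Lemma \ref{lem3366.200}(ii). The resulting bound is
\[
|S(d)|\ll \frac{1}{q}\sum_{1\leq t\leq q-1}\bigl(2dq^{1/2}\log q\bigr)\cdot\frac{4q\log\log p}{\pi t}\ll d\,q^{1/2}(\log q)(\log\log p)\sum_{1\leq t\leq q-1}\frac{1}{t},
\]
and since $\sum_{t\leq q-1}1/t\ll\log q\ll\log p$ this collapses to $|S(d)|\ll d\,p^{1/2}(\log p)^{2}\log\log p$, which is $O(p^{1-\varepsilon})$ for every $\varepsilon\in(0,1/2)$ once $d\ll(\log p)^{B}$ is fixed and $p$ is large.

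The one genuine subtlety, and where I expect the main obstacle to lie, is reconciling the coprimality condition actually needed — $\gcd(n,(p-1)/d)=1$ — with the clean estimate in Lemma \ref{lem3366.200}(ii), which as stated concerns $\gcd(n,p-1)=1$. I would either re-run the inclusion–exclusion of Lemma \ref{lem3366.200}(i) with the modulus $(p-1)/d$ in place of $p-1$, verifying that the Möbius sum over $r\mid(p-1)/d$ still telescopes to a bound of the form $\frac{Cq\log\log p}{t}$ (the divisor bound $\sum_{r\mid m}r^{-1}\leq 2\log\log m$ applies equally to $m=(p-1)/d$), or absorb the discrepancy by noting that enlarging the coprimality modulus only refines the summation set. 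A secondary point to check is that the Weil bound of Theorem \ref{thm4466.430} is genuinely uniform in $t$ across the whole range $1\leq t\leq q-1$, since the character $\chi(s)=\omega^{-ts}$ varies with $t$ but its order divides $q$ independently of $t$, so the bound $2dq^{1/2}\log q$ holds throughout. Granting these two verifications, the dominant term is the sum over $t\geq 1$, and the $p^{1/2}$ savings from Weil against the harmonic-sized loss $\log p$ yields the stated $p^{1-\varepsilon}$ with a wide margin.
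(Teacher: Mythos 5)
Your proposal follows the paper's own argument essentially step for step: the same auxiliary prime $q=p+o(p)$ and finite summation kernel of Definition \ref{dfn3366.104}, the same separation into the $t=0$ contribution (controlled by the Weil bound of Theorem \ref{thm4466.430}) and the $t\geq 1$ range (controlled by the coprimality kernel of Lemma \ref{lem3366.200} against the harmonic sum $\sum_t 1/t\ll\log q$), arriving at a bound of the shape $p^{1/2}(\log p)^{O(1)}\ll p^{1-\varepsilon}$. The subtlety you flag about adapting Lemma \ref{lem3366.200} from modulus $p-1$ to $(p-1)/d$ is real but is resolved exactly as you suggest, by re-running the inclusion--exclusion over $r\mid(p-1)/d$, which is what the paper implicitly does.
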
 

\begin{proof} Let $q=p+o(p)$ be a large prime, and write $f(n)=e^{i 2 \pi a\tau^{dn} /p}$, where $\tau$ is a primitive root modulo $p$. Start with the representation
\begin{equation} \label{eq3344.502}
\sum_{ \gcd(n,(p-1)/d)=1}  e^{\frac{i2\pi a \tau^{dn}}{p}}= \sum_{ \gcd(n,(p-1)/d)=1}\frac{1}{q} \sum_{0 \leq t\leq q-1,}  \sum_{1 \leq s\leq p-1} \omega^{t(n-s)}e^{\frac{i2\pi a \tau^{ds}}{p}} ,
\end{equation}
see Definition \ref{dfn3366.104}. Use the inclusion exclusion principle to rewrite the exponential sum as
\begin{equation}\label{eq3344.504}
\sum_{\gcd(n,(p-1)/d)=1} e^{ \frac{i2\pi a \tau^{dn}}{p}} 
= \sum_{ n \leq (p-1)/d}\frac{1}{q} \sum_{0 \leq t\leq q-1,}  \sum_{1 \leq s\leq p-1} \omega^{t(n-s)}e^{\frac{i2\pi a \tau^{ds}}{p}} \sum_{\substack{r \mid (p-1)/d \\ r \mid n}}\mu(r)   .
\end{equation} 
The value $t=0$ contributes 
\begin{eqnarray}\label{eq3344.506}
T_0(d,p)&=& \sum_{ n \leq (p-1)/d}\frac{1}{q} \sum_{1 \leq s\leq p-1} e^{\frac{i2\pi a \tau^{ds}}{p}} \sum_{\substack{r \mid (p-1)/d \\ r \mid n}}\mu(r)\nonumber\\ 
 &\leq &\frac{1}{q}\sum_{ r \mid (p-1)/d} \left |\sum_{1 \leq s\leq p-1} e^{\frac{i2\pi a \tau^{ds}}{p}}\right | \sum_{ m \leq (p-1)/rd}1 \\
&\leq &\frac{1}{q}\frac{p-1}{d} \left |\sum_{1 \leq s\leq p-1} e^{\frac{i2\pi a \tau^{ds}}{p}}\right | \sum_{ r \mid (p-1)/d}\frac{1}{r}\nonumber\\
&\leq &\frac{2p^{1/2}\log^2 p}{d} \nonumber,
\end{eqnarray} 
where the middle sum is a Weil sum, see Theorem \ref{thm4466.430}, and $(p-1)/q\leq 1$. Replacing \eqref{eq3344.506} into \eqref{eq3344.504}, and rearranging it, yield
\begin{eqnarray}\label{eq3344.508}
\rho(d,p)&=&\sum_{\gcd(n,(p-1)/d)=1} e^{ \frac{i2\pi a \tau^{dn}}{p}} \\
&=& \sum_{ n \leq (p-1)/d}\frac{1}{q} \sum_{1 \leq t\leq q-1,}  \sum_{1 \leq s\leq p-1} \omega^{t(n-s)}e^{\frac{i2\pi a \tau^{ds}}{p}} \sum_{\substack{r \mid (p-1)/d \\ r \mid n}}\mu(d) \nonumber\\
&&\hskip 3.0 in +O\left(\frac{2p^{1/2}\log^2 p}{d}\right ) \nonumber \\
&=&\frac{1}{q} \sum_{1 \leq t\leq q-1} \left ( \sum_{1 \leq s\leq p-1} \omega^{-ts}e^{\frac{i2\pi a \tau^{ds}}{p}}\right )\left (\sum_{r \mid (p-1)/d} \mu(d) \sum_{\substack{n \leq (p-1)/d, \\ r \mid n}}   \omega^{tn} \right ) \nonumber\\
&&\hskip 3.0 in +O\left(\frac{2p^{1/2}\log^2 p}{d}\right ) \nonumber.
\end{eqnarray} 
Taking absolute value, and applying Lemma \ref{lem3366.200} to the inner sum, and Theorem \ref{thm4466.430} to the middle sum, yield
\begin{eqnarray} \label{eq3344.510}
|\rho(d,p)|&=& \left | \sum_{ \gcd(n, (p-1)/d)=1} e^{\frac{i2\pi a \tau^{dn}}{p}} \right | \\
&\leq&\frac{1}{q}  \sum_{1 \leq t\leq q-1} \left | \sum_{1 \leq s\leq p-1} \omega^{-ts}e^{i2\pi a \tau^{ds}/p} \right | \cdot  \left |\sum_{r \mid (p-1)/d} \mu(d) \sum_{\substack{n \leq (p-1)/d, \\ r \mid n}}   \omega^{tn} \right | \nonumber \\
&&\hskip 3.0 in+O\left(\frac{2p^{1/2}\log^2 p}{d}\right )\nonumber \\
&\ll&\frac{1}{q}  \sum_{1 \leq t\leq q-1} \left ( 2p^{1/2} \log^2 p \right ) \cdot  \left ( \frac{4q \log \log p}{\pi t} \right )+O\left(\frac{2p^{1/2}\log^2 p}{d}\right )\nonumber\\
&\ll& p^{1/2} \log^3 p \nonumber.
\end{eqnarray}
The last summation in \eqref{eq3344.510} uses the estimate 
\begin{equation} \label{eq3344.512} \sum_{1 \leq t\leq q-1}\frac{1}{t}\ll \log q\ll \log p
\end{equation} since $q=p+o(p)$. This is restated in the simpler notation $p^{1/2}\log ^3 p  \leq p^{1-\varepsilon}$ for any arbitrary small number $\varepsilon \in (0,1/2)$, as $x \to \infty$. 
\end{proof}

The upper bound given in Theorem \ref{thm3344.500} for maximal $\varepsilon <1/2$ seems to be optimum. A different proof, which has a weaker upper bound is included here as a reference for a second independent proof.
\begin{thm}  \label{thm3344.550}  {\normalfont (\cite[Theorem 6]{FS00})} Let \(p\geq 2\) be a large prime, and let $\tau $ be a primitive root modulo $p$. Then,
\begin{equation}
\sum_{\gcd(n,p-1)=1} e^{i2\pi a \tau^n/p} \ll  p^{1-\varepsilon} 
\end{equation} 
for any integer $a \in [1, p-1]$, and any arbitrary small number $\varepsilon >0$ is a small number. 
\end{thm}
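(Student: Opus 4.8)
The plan is to recognize the sum as an additive character sum over the primitive roots modulo $p$, and to evaluate it by means of the classical divisor-dependent characteristic function \eqref{eq3000.005} together with the standard Gauss sum estimate. As $n$ ranges over the integers in $[1,p-1]$ coprime to $p-1$, the power $\tau^n$ runs exactly once over the $\varphi(p-1)$ primitive roots modulo $p$. Writing $\psi(z)=e^{i2\pi z/p}$, the sum in question is therefore
\begin{equation}
S(a)=\sum_{\gcd(n,p-1)=1}\psi(a\tau^n)=\sum_{u\in\F_p^{\times}}\Psi(u)\,\psi(au),
\end{equation}
where $\Psi$ is the indicator function of primitive roots. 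First I would substitute the Vinogradov representation \eqref{eq3000.005} for $\Psi(u)$ and interchange the order of summation, which rewrites $S(a)$ as a weighted sum of Gauss sums.

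Carrying out the expansion yields
\begin{equation}
S(a)=\frac{\varphi(p-1)}{p-1}\sum_{d\mid p-1}\frac{\mu(d)}{\varphi(d)}\sum_{\ord(\chi)=d}\;\sum_{u\in\F_p^{\times}}\chi(u)\,\psi(au).
\end{equation}
The innermost sum is a Gauss sum: for a nonprincipal $\chi$ it has absolute value exactly $p^{1/2}$, while the principal character (the term $d=1$) contributes $\sum_{u\in\F_p^{\times}}\psi(au)=-1$. Since there are precisely $\varphi(d)$ characters of order exactly $d$, the factor $1/\varphi(d)$ cancels against this count, and taking absolute values leaves
\begin{equation}
|S(a)|\le \frac{\varphi(p-1)}{p-1}\Bigl(1+p^{1/2}\sum_{d\mid p-1}|\mu(d)|\Bigr)\ll 2^{\omega(p-1)}\,p^{1/2},
\end{equation}
where $\omega(p-1)$ denotes the number of distinct prime divisors of $p-1$. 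Finally I would invoke the divisor bound $2^{\omega(p-1)}\ll_{\varepsilon}p^{\varepsilon}$ together with $\varphi(p-1)/(p-1)\le 1$ to conclude $|S(a)|\ll p^{1/2+\varepsilon}\ll p^{1-\varepsilon}$, uniformly in $a\in[1,p-1]$.

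The one nonroutine point is the character bookkeeping: one must be careful that the sum $\sum_{\ord(\chi)=d}$ runs over the $\varphi(d)$ characters of order \emph{exactly} $d$, so that the weights $\mu(d)/\varphi(d)$ combine with the Gauss sum bound to produce the clean divisor sum $\sum_{d\mid p-1}|\mu(d)|=2^{\omega(p-1)}$. The accumulation of this factor $2^{\omega(p-1)}$ is precisely the reason the estimate is stated only in the crude form $p^{1-\varepsilon}$ rather than as the sharper $p^{1/2}\log^{3}p$ obtained in Theorem \ref{thm3344.500}; sharpening the present argument to recover a genuine power of $\log p$ in place of $p^{\varepsilon}$ would require the finite summation kernel and Weil sum machinery used there, which is why this serves only as an independent cross-check.
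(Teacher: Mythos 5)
Your proof is correct, but it takes a genuinely different route from the paper --- indeed the paper supplies no proof of this statement at all: it is quoted as \cite[Theorem 6]{FS00} and included only as an independent cross-check of Theorem \ref{thm3344.500}, whose own proof goes through the finite summation kernel of Section \ref{S3366} and the Weil/Gauss sum bound of Theorem \ref{thm4466.430}. Your argument is the classical Vinogradov one: since $\tau^n$ runs over the primitive roots as $n$ runs over the residues coprime to $p-1$, you expand the indicator by \eqref{eq3000.005}, swap sums, and bound each complete Gauss sum by $p^{1/2}$ (the principal-character term $d=1$ contributing $-1$). The bookkeeping is right: the $\varphi(d)$ characters of exact order $d$ cancel the weight $1/\varphi(d)$, leaving $\sum_{d\mid p-1}|\mu(d)|=2^{\omega(p-1)}$, and the divisor bound $2^{\omega(p-1)}\ll_{\varepsilon}p^{\varepsilon}$ gives $|S(a)|\ll p^{1/2+\varepsilon}\ll p^{1-\varepsilon}$ for $\varepsilon<1/4$, uniformly in $a$. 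What your route buys is a short, self-contained proof that is actually \emph{sharper} for this particular sum than the stated $p^{1-\varepsilon}$; what it does not give is the generality of \cite[Theorem 6]{FS00}, which allows $\tau$ to be replaced by an element of arbitrary multiplicative order $t\mid p-1$, in which case the condition $\gcd(n,t)=1$ no longer parametrizes the primitive roots of $\F_p^{\times}$ and the Vinogradov expansion is unavailable. For that reason your closing remark is slightly off: the factor $2^{\omega(p-1)}$ (which is $\ll p^{\varepsilon}$ but not $\ll\log^{A}p$) is not what forces the crude form of the statement --- the generality of the source theorem is.
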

Other related results are given in \cite{CC09}, \cite{FS01}, \cite{GZ05}, and \cite[Theorem 1]{GK05}.

\section{Explicit Exponential Sums} \label{S3535}
An explicit version of Theorem \ref{thm3344.500} and Theorem \ref{thm3344.550} is computed below.

\begin{thm} \label{thm3535.100} Let $m\geq 1$ be an integer, and let $Q\geq 1$ be the period of the element $w\in \Z/m\Z$. If the number $P<Q$, then 
\begin{equation}\label{eq3535.100}
\sum_{1\leq n \leq P}e^{i2\pi aw^{n}/m}\leq c_0P^{1-\varepsilon}.
\end{equation}
where $a\ne0$, $\varepsilon=c_1\log P/\log m<1$ is a small number, and $c_0,c_1>0$ are constants.
\end{thm}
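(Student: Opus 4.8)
The plan is to combine a completion step with a high--moment amplification driven by the multiplicative self--similarity $w^{n+h}=w^{h}\cdot w^{n}$, and then to read off the exponent $\varepsilon=c_1\log P/\log m$ from the optimal choice of the moment parameter. Throughout I write $\psi(z)=e^{i2\pi z/m}$ and $S=\sum_{1\le n\le P}\psi(aw^{n})$. Since $P<Q$, the residues $w,w^{2},\ldots,w^{P}$ are pairwise distinct modulo $m$; this is the only structural fact about $w$ that I will invoke at the orthogonality step.

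First I would exploit shift invariance of the orbit. For each $0\le h\le H-1$ (with a free parameter $H\le Q$ to be chosen), the substitution $n\mapsto n+h$ carries the range $[1,P]$ into $[h+1,P+h]$, so that for each such $h$
\begin{equation}
S=\sum_{1\le n\le P}\psi\!\left(aw^{h}w^{n}\right)+O(H),
\end{equation}
because the two ranges differ in at most $2h\le 2H$ terms. Averaging over $h$ and applying H\"older's inequality with exponent $2k$ gives
\begin{equation}
|S|\le \frac{1}{H}\sum_{0\le h\le H-1}\left|\sum_{1\le n\le P}\psi\!\left(aw^{h}w^{n}\right)\right|+O(H)
\le H^{-\frac{1}{2k}}\left(\sum_{0\le h\le H-1}\left|\sum_{1\le n\le P}\psi\!\left(aw^{h}w^{n}\right)\right|^{2k}\right)^{\frac{1}{2k}}+O(H).
\end{equation}
The coefficients $aw^{h}$ are distinct residues modulo $m$, so the $h$--sum is dominated by the complete sum over all residues $b\bmod m$; expanding the $2k$--th power and using additive orthogonality modulo $m$ turns it into a solution count,
\begin{equation}
\sum_{0\le h\le H-1}\left|\sum_{1\le n\le P}\psi(aw^{h}w^{n})\right|^{2k}
\le \sum_{b\bmod m}\left|\sum_{1\le n\le P}\psi(bw^{n})\right|^{2k}=m\,J_{k}(P),
\end{equation}
where $J_{k}(P)$ counts the solutions of $w^{n_1}+\cdots+w^{n_k}\equiv w^{l_1}+\cdots+w^{l_k}\pmod m$ with all $n_i,l_j\in[1,P]$.

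The crux --- and the step I expect to be the main obstacle --- is the additive--energy estimate
\begin{equation}
J_{k}(P)\ll_{k} P^{k}+\frac{P^{2k}}{m}.
\end{equation}
Here $P^{k}$ is the diagonal (permutation) contribution and $P^{2k}/m$ is the generic contribution forced by orthogonality; what must be shown is that the multiplicative orbit $\{w^{n}\}$ carries no additional additive coincidences, i.e.\ that its order--$k$ additive energy does not exceed the random bound. For an arbitrary set of size $P$ only the trivial $J_k\le P^{2k-1}$ is available, so one genuinely has to use that the summands are consecutive powers of a single element: for prime $m$ this can be arranged by reading the congruence as counting orbit points on a fixed affine subvariety and invoking Weil-type bounds (Theorems \ref{thm4466.420}--\ref{thm4466.430}) for each fixed $k$, whereas for composite $m$ one must appeal instead to bounds for the number of solutions of equations in the cyclic group $\langle w\rangle$. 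The $k$--dependence of the implied constant is the delicate point, but after the $2k$--th root it is tamed: a factor $C^{k}$ becomes $C^{1/2}$ and a factor $k!$ becomes only $(k!)^{1/(2k)}\ll\sqrt{k}$, both of which are absorbable.

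Granting the energy bound, I would substitute and optimise. Since $(mJ_k(P))^{1/(2k)}\ll m^{1/(2k)}P^{1/2}+P$, the estimate becomes
\begin{equation}
|S|\ll H^{-\frac{1}{2k}}m^{\frac{1}{2k}}P^{\frac{1}{2}}+H^{-\frac{1}{2k}}P+H .
\end{equation}
Choosing the moment parameter $k=\lceil \log m/\log P\rceil$, so that $P^{k}\ge m$ and hence $m^{1/(2k)}\le P^{1/2}$, makes the first two terms comparable, and the balancing value $H=P^{\,2k/(2k+1)}$ collapses the bound to
\begin{equation}
|S|\ll P^{\,1-\frac{1}{2k+1}} .
\end{equation}
Because $2k+1\asymp \log m/\log P$, this is precisely $c_0P^{1-\varepsilon}$ with $\varepsilon=c_1\log P/\log m$, the constants $c_0,c_1>0$ absorbing the (now harmless) $k$--dependence of the energy constant. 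Finally one checks that $H=P^{\,2k/(2k+1)}<P<Q$, so the shift argument is legitimate and the error terms $O(H)$ are of the same order as the main term, completing the estimate.
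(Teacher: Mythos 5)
Your overall architecture (shift by $h$, H\"older with exponent $2k$, completion over residues, orthogonality, optimisation of $k$ and $H$) is the right shape for a Korobov-type bound, and the final bookkeeping is consistent: with $k\asymp\log m/\log P$ and $H=P^{2k/(2k+1)}$ you would indeed land on $P^{1-c_1\log P/\log m}$. But the step you yourself flag as the crux is a genuine gap, not a deferred technicality. The additive-energy bound $J_k(P)\ll_k P^k+P^{2k}/m$ asserts square-root-cancellation-level additive behaviour for the orbit $\{w,w^2,\ldots,w^P\}$, and nothing in this paper (or in standard tools) delivers it. The Weil/Gauss bounds of Theorems \ref{thm4466.400}--\ref{thm4466.430} are \emph{complete} character sums over all of $\F_p$; they say nothing about solution counts in which the exponents $n_i,l_j$ are confined to an initial segment $[1,P]$ of the period. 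Even in the complete case $P=Q$ they only give $\bigl|\sum_{x\in\langle w\rangle}e^{i2\pi bx/m}\bigr|\le m^{1/2}$, hence $J_k(Q)\le Q^{2k}/m+m^k$, and the term $m^k$ swamps the desired diagonal term $Q^k$ whenever $Q<m$, i.e.\ for every proper subgroup. For incomplete $P$ and small $k$ (say $k=2$) your claimed bound would essentially resolve the additive energy of small multiplicative subgroups at the optimal exponent, which is the hard core of the Bourgain--Glibichuk--Konyagin sum--product circle of ideas and is open at that strength. So "granting the energy bound" grants the entire theorem.

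For comparison: the paper does not prove this statement at all --- its proof is a one-line citation to Korobov \cite{KN92} --- and Korobov's actual argument runs along a different track than yours. Instead of measuring additive coincidences of the orbit modulo $m$, he shifts by a \emph{product}, $n\mapsto n+yz$, averages over $y\le Y$, $z\le Z$, and after H\"older reduces everything to counting solutions of the congruence $y_1z_1\equiv y_2z_2\pmod Q$ in the exponent group $\Z/Q\Z$, which is handled by elementary divisor-function estimates rather than by any deep energy bound; the only arithmetic input modulo $m$ is the injectivity of $n\mapsto w^n$ on a period. If you want to complete a proof along recognisably similar lines, that multiplicative-congruence route (as in the Friedlander--Shparlinski and Garaev papers cited in the bibliography) is the one that closes, whereas the additive-energy route you sketched does not.
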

\begin{proof} A discussion of this exponential sum and a proof appears in \cite[p.\ 8]{KN92}.
\end{proof}

The complete exponential sum
\begin{equation}\label{eq3535.102}
\sum_{1\leq n \leq Q}e^{i2\pi aw^{n}/m}
\end{equation}
is known to be a very small number or to vanish. An upper bound for a related and different exponential sum will be used in the analysis of the orders of elements in finite rings.
\begin{thm} \label{thm3535.200} Let $m\geq 1$ be an integer, and let $Q\geq 1$ be the period of the element $w\in \Z/m\Z$. If the number $P<Q$, then 
\begin{equation}\label{eq3535.100}
\sum_{\substack{1\leq n \leq P\\ \gcd(n,\varphi(m))}}e^{i2\pi aw^{n}/m}\leq c_0m^{\varepsilon}P^{1-2\varepsilon}.
\end{equation}
where $a\ne0$, $2\varepsilon=c_1\log P/\log m<1$ is a small number, and $c_0,c_1>0$ are constants.
\end{thm}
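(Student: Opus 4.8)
The plan is to reduce the restricted sum to the unrestricted incomplete sums already controlled by Theorem \ref{thm3535.100}, by removing the coprimality constraint $\gcd(n,\varphi(m))=1$ via Möbius inversion. First I would write
\[
S:=\sum_{\substack{1\leq n\leq P\\ \gcd(n,\varphi(m))=1}}e^{i2\pi aw^{n}/m}=\sum_{1\leq n\leq P}e^{i2\pi aw^{n}/m}\sum_{\substack{r\mid \varphi(m)\\ r\mid n}}\mu(r)=\sum_{r\mid \varphi(m)}\mu(r)\sum_{\substack{1\leq n\leq P\\ r\mid n}}e^{i2\pi aw^{n}/m},
\]
and then substitute $n=rk$ to obtain the clean form
\[
S=\sum_{r\mid \varphi(m)}\mu(r)\sum_{1\leq k\leq P/r}e^{i2\pi a(w^{r})^{k}/m}.
\]
Each inner sum is now an incomplete exponential sum of exactly the type handled by Theorem \ref{thm3535.100}, but for the element $w^{r}\in\Z/m\Z$ and over the shorter range $k\leq P/r$.

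Before applying Theorem \ref{thm3535.100} to the inner sums I would check the hypothesis $P/r<Q_{r}$, where $Q_{r}$ is the period of $w^{r}$. Since $w$ has period $Q$, the power $w^{r}$ has period $Q_{r}=Q/\gcd(Q,r)$, and the inequality $P\gcd(Q,r)\leq Pr<Qr$ gives $P/r<Q/\gcd(Q,r)=Q_{r}$, as required. Theorem \ref{thm3535.100} then yields
\[
\left|\sum_{1\leq k\leq P/r}e^{i2\pi a(w^{r})^{k}/m}\right|\leq c_{0}\,(P/r)^{1-\varepsilon_{r}},\qquad \varepsilon_{r}=c_{1}\frac{\log(P/r)}{\log m}.
\]
The key observation is that the target exponent $2\varepsilon=c_{1}\log P/\log m$ is precisely the exponent defect produced by the term $r=1$, so that the $r=1$ contribution is exactly $c_{0}P^{1-2\varepsilon}$.

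To finish I would bound each inner term uniformly by $c_{0}P^{1-2\varepsilon}$ and then sum over divisors. Writing $y=\log(P/r)$ and $L=\log m$, the inner bound reads $c_{0}\exp\!\big(y-c_{1}y^{2}/L\big)$; the function $y\mapsto y-c_{1}y^{2}/L$ is increasing on $[0,\log P]$ whenever $\log P$ lies to the left of the vertex $L/(2c_{1})$, which is guaranteed by the smallness hypothesis on $\varepsilon$. Hence its maximum over $0\leq y\leq\log P$ is attained at $y=\log P$, giving $(P/r)^{1-\varepsilon_{r}}\leq P^{1-2\varepsilon}$ for every $r\mid\varphi(m)$. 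Summing and using the divisor estimate $d(\varphi(m))\leq d(m)\ll m^{o(1)}\leq m^{\varepsilon}$ for all sufficiently large $m$, I obtain
\[
|S|\leq c_{0}P^{1-2\varepsilon}\sum_{r\mid\varphi(m)}1=c_{0}\,d(\varphi(m))\,P^{1-2\varepsilon}\leq c_{0}\,m^{\varepsilon}P^{1-2\varepsilon},
\]
after adjusting the constant $c_{0}$, which is the claimed bound.

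The main obstacle will be making the exponent bookkeeping uniform in $r$: the per-divisor exponent $\varepsilon_{r}$ genuinely depends on $r$, and the reduction of every $(P/r)^{1-\varepsilon_{r}}$ to the single clean bound $P^{1-2\varepsilon}$ relies on the monotonicity of the parabola $y-c_{1}y^{2}/L$ over the whole range of interest, which in turn needs $\varepsilon$ (equivalently $\log P/\log m$) to be small enough that $\log P$ does not overshoot the vertex $L/(2c_{1})$. A secondary point to watch is absorbing the divisor count $d(\varphi(m))$ into the factor $m^{\varepsilon}$; this is harmless for large $m$ since $d(\varphi(m))=m^{o(1)}$, but it does force $P$ not to be too small relative to $m$ and is precisely why the final estimate carries the extra $m^{\varepsilon}$ compared with the unrestricted Theorem \ref{thm3535.100}.
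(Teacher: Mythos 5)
Your proposal follows essentially the same route as the paper: Möbius inversion over the divisors of $\varphi(m)$ to remove the coprimality condition, Theorem \ref{thm3535.100} applied to each inner sum $\sum_{k\leq P/r}e^{i2\pi a(w^r)^k/m}$, and the divisor count absorbed into the factor $m^{\varepsilon}$. In fact you are more careful than the paper on two points it glosses over --- verifying that $P/r$ stays below the period of $w^r$, and making the $r$-dependent exponents $\varepsilon_r$ uniform via the parabola argument --- so no changes are needed.
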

\begin{proof} Let $P=Q-1$, and rewrite the exponential sum in the form
\begin{eqnarray}\label{eq3535.105}
\sum_{\substack{1\leq n \leq P\\ \gcd(n,\varphi(m)}}e^{i2\pi aw^{n}/m}
&=& \sum_{1\leq n \leq P}e^{i2\pi aw^{n}/m} \sum_{\substack{d\mid n\\ d\mid \varphi(m)}}\mu(d)\\
&=& \sum_{ d\mid \varphi(m)}\mu(d)\sum_{\substack{1\leq n \leq P\\d\mid n}}e^{i2\pi aw^{n}/m} \nonumber.
\end{eqnarray}
Taking absolute value, and applying Theorem \ref{thm3535.100} to the inner exponential sum return
\begin{eqnarray}\label{eq3535.105}
\left |\sum_{\substack{1\leq n \leq P\\ \gcd(n,\varphi(m))}}e^{i2\pi aw^{n}/m}\right |
&\leq& \sum_{ d\mid \varphi(m)}1\left |\sum_{\substack{1\leq n \leq P\\d\mid n}}e^{i2\pi aw^{n}/m}\right |\\
&\leq& \sum_{ d\mid \varphi(m)}1\cdot c_1\left (\frac{P}{d}\right)^{1-2\varepsilon} \nonumber\\
&\leq& c_2P^{1-2\varepsilon}\sum_{ d\mid \varphi(m)}\frac{1}{d^{1-2\varepsilon}} \nonumber\\
&\leq& c_3P^{1-2\varepsilon}\sum_{ d\mid \varphi(m)}1 \nonumber\\
&\leq& c_4P^{1-2\varepsilon}\varphi(m)^{\varepsilon} \nonumber,
\end{eqnarray}
where $c_0=c_4, c_1, c_2, c_3>0$ are constants.  Plugging the trivial upper bounds $P\leq m$, and $\varphi(m)\leq m$ complete the verification of the inequality 
\begin{eqnarray}\label{eq3535.109}
\left |\sum_{\substack{1\leq n \leq P\\ \gcd(n,\varphi(m))}}e^{i2\pi aw^{n}/m}\right |
&\leq& c_4P^{1-2\varepsilon}\varphi(m)^{\varepsilon} \leq c_4m^{1-\varepsilon},
\end{eqnarray}
for sufficiently large $m$.
\end{proof}

\section{Equivalent Exponential Sums} \label{S3500}
This section demonstrate that the exponential sums 
\begin{equation} \label{eq3500.503}
\sum_{\gcd(n,(p-1)/d)=1} e^{i2\pi a \tau^{dn}/p} \quad \text{ and } \quad \sum_{\gcd(n,(p-1)/d)=1} e^{i2\pi \tau^{dn}/p},
\end{equation}
where $d\mid p-1$, and $a\ne0$, are asymptotically equivalent. This result expresses this exponential sum as a sum of simpler exponential sum and an error term. The proof is entirely based on established results and elementary techniques.  

\begin{thm}   \label{thm3500.900}  Let \(p\geq 2\) be a large primes, and let $d\mid p-1$ be a small divisor. If $\tau $ be a primitive root modulo $p$, then,
\begin{equation} \sum_{\gcd(n,(p-1)/d)=1} e^{i2\pi a \tau^{dn}/p} = \sum_{\gcd(n,(p-1)/d)=1} e^{i2\pi  \tau^{dn}/p} + O(p^{1/2} \log^4 p),
\end{equation} 
for any integer $ a \in [1, p-1]$. 	
\end{thm}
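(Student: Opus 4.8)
The plan is to treat the statement as an essentially immediate consequence of the complete-sum bound already established in Theorem \ref{thm3344.500}. The key observation is that neither side of the claimed identity carries a main term: both are genuine cancellation sums of size $O(p^{1/2}\log^3 p)$, so the asserted ``equivalence'' is really the assertion that the two error terms are of the same order and may be freely interchanged. There is nothing to preserve except the order of magnitude, and the displayed error $O(p^{1/2}\log^4 p)$ is simply a safe upper envelope for both.

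First I would record that Theorem \ref{thm3344.500} applies verbatim to each of the two sums separately. Applying it with the given integer $a$ yields
\[
\sum_{\gcd(n,(p-1)/d)=1} e^{i2\pi a \tau^{dn}/p} \ll p^{1/2}\log^3 p,
\]
and applying it with $a=1$ (an admissible choice of the fixed integer in $[1,p-1]$) gives the same bound for the second sum. The essential point to verify is that the implied constant in Theorem \ref{thm3344.500} is uniform in $a$: inspecting its proof, the parameter $a$ enters only through the complete Weil sum $\sum_{1\le s\le p-1} e^{i2\pi a\tau^{ds}/p}$ and its additively twisted companions, and the bound supplied by Theorem \ref{thm4466.430} is independent of $a\neq 0$. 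Subtracting the two estimates and invoking the triangle inequality then exhibits the difference as $O(p^{1/2}\log^3 p)$, which is absorbed into the stated $O(p^{1/2}\log^4 p)$.

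Alternatively, and more in the spirit of the preceding sections, I would apply the finite summation kernel of Definition \ref{dfn3366.104} directly to the difference
\[
\sum_{\gcd(n,(p-1)/d)=1}\left(e^{i2\pi a\tau^{dn}/p}-e^{i2\pi\tau^{dn}/p}\right),
\]
separating the frequency $t=0$ from the range $1\le t\le q-1$ exactly as in the proof of Theorem \ref{thm3344.500}. The $t=0$ term reduces to $\tfrac{1}{q}\cdot\tfrac{p-1}{d}$ times a difference of two complete Weil sums, each bounded through Theorem \ref{thm4466.430}, in which the factors of $d$ cancel and leave a contribution of $O(p^{1/2}\log p)$. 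For $1\le t\le q-1$ one factors the expression into a twisted exponential sum over $s$, estimated by Theorem \ref{thm4466.430}, and the coprimality-restricted kernel sum over $n$, estimated by Lemma \ref{lem3366.200}; the remaining $\sum_t t^{-1}\ll\log p$ then produces the surplus logarithmic powers. This route reproduces the bound with the slightly more generous exponent $\log^4 p$, which is presumably why the statement is phrased with that power.

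The main obstacle is conceptual rather than computational: one must be careful that the argument does not tacitly rely on a cancellation between the two sums, since there is no common main term to cancel. What legitimizes the interchange is precisely the uniformity of the Weil-type bounds in the multiplier $a$, together with the fact that the coprimality restriction $\gcd(n,(p-1)/d)=1$ is identical on both sides, so the kernel sum over $n$ is the same in either case and factors out cleanly in the difference. Once the $a$-uniformity is confirmed, no input beyond Theorem \ref{thm3344.500}, Theorem \ref{thm4466.430}, and Lemma \ref{lem3366.200} is required.
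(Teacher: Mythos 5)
Your proposal is correct, and it actually contains two arguments. The second one (apply the kernel of Definition \ref{dfn3366.104} to the difference, split off $t=0$, bound the twisted sum over $s$ by Theorem \ref{thm4466.430} and the coprimality-restricted kernel sum over $n$ by Lemma \ref{lem3366.200}, then use $\sum_t t^{-1}\ll\log p$) is essentially the paper's own proof verbatim; the paper carries out exactly this differencing and arrives at $16\,p^{1/2}(\log^2 p)(\log q)(\log\log p)\le 16\,p^{1/2}\log^4 p$. Your first argument is genuinely different and strictly simpler: since neither side has a main term, the claim follows at once from the bound $O(p^{1/2}\log^3 p)$ holding for each sum separately, uniformly in $a$ --- and it even yields the sharper error $O(p^{1/2}\log^3 p)$, exposing the differencing in the paper's proof as redundant. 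The one point you rightly flag, and which is essential, is that the \emph{stated} conclusion of Theorem \ref{thm3344.500} is only $\ll p^{1-\varepsilon}$ with $\varepsilon<1/2$, which is too weak for the claimed error term; you must quote the bound $p^{1/2}\log^3 p$ established inside its proof (display \eqref{eq3344.510}), together with the $a$-independence of the Weil-sum estimate of Theorem \ref{thm4466.430}. With that caveat made explicit, both of your routes are sound.
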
 
\begin{proof} For any integer $a\geq 1$, the exponential sum has the representation 
\begin{eqnarray} \label{eq3500.902}
\rho(a,d,p)&=& 	\sum_{\gcd(n,(p-1)/d)=1} e^{\frac{i2\pi a \tau^{dn}}{p}} \\
&=&\frac{1}{q} \sum_{1 \leq t\leq q-1} \left ( \sum_{1 \leq s\leq p-1} \omega^{-ts}e^{\frac{i2\pi a\tau^{ds}}{p}}\right )\left (\sum_{r \mid (p-1)/d} \mu(r) \sum_{\substack{n \leq (p-1)/d, \\ r \mid n}}   \omega^{tn} \right ) \nonumber\\
&&\hskip 3.0 in +O\left(\frac{2p^{1/2}\log^2 p}{d}\right )\nonumber,
\end{eqnarray} 
confer equations \eqref{eq3344.502} to \eqref{eq3344.508} for details. And, for $a=1$, 
\begin{eqnarray} \label{eq3500.904}
\rho(1,d,p)&=& 	\sum_{\gcd(n,(p-1)/d)=1} e^{\frac{i2\pi  \tau^{dn}}{p}} \\
&=& \frac{1}{q} \sum_{1 \leq t\leq q-1} \left ( \sum_{1 \leq s\leq p-1} \omega^{-ts}e^{\frac{i2\pi  \tau^{ds}}{p}}\right )\left (\sum_{r \mid (p-1)/d} \mu(r) \sum_{\substack{n \leq (p-1)/d, \\ r \mid n}}   \omega^{tn} \right )\nonumber\\
&&\hskip 3.0 in +O\left(\frac{2p^{1/2}\log^2 p}{d}\right )\nonumber,
\end{eqnarray}
respectively, see equations \eqref{eq3344.502} to \eqref{eq3344.508}. Differencing (\ref{eq3500.902}) and (\ref{eq3500.904}) produces 
\begin{eqnarray} \label{eq3355.390}
D(d,p)&=&	\sum_{\gcd(n,(p-1)/d)=1} e^{i2\pi a \tau^{dn}/p} -\sum_{\gcd(n,(p-1)/d)=1} e^{i2\pi  \tau^{dn}/p} \\
&=&     \frac{1}{q} \sum_{0 \leq t\leq q-1} \left ( \sum_{1 \leq s\leq p-1} \omega^{-ts}e^{\frac{i2\pi a \tau^{ds}}{p}}-\sum_{1 \leq s\leq p-1} \omega^{-ts}e^{\frac{i2\pi  \tau^{ds}}{p}}\right ) \nonumber \\
&& \hskip 2.30 in \times \left (\sum_{r \mid (p-1)/d} \mu(r) \sum_{\substack{n \leq (p-1)/d, \\ r \mid n}}   \omega^{tn} \right ) \nonumber.
\end{eqnarray}
By  Lemma \ref{lem3366.200}, the relatively prime summation kernel is bounded by
\begin{eqnarray} \label{eq3355.393}
 \left |\sum_{r \mid (p-1)/d} \mu(r) \sum_{\substack{n \leq (p-1)/d, \\ r \mid n}}   \omega^{tn} \right | 
&=& \left | \sum_{\gcd(n, (p-1)/d)=1}\omega^{tn} \right | \nonumber \\ 
&\leq &  \frac{4 q \log \log p} {\pi t}, 
\end{eqnarray}
and by Theorem \ref{thm4466.430}, the difference of two Weil sums (or Gauss sums) is bounded by
\begin{eqnarray} \label{eq3355.395}
\delta(d,p)&=&  \left | \sum_{1 \leq s\leq p-1} \omega^{-ts}e^{\frac{i2\pi a \tau^{ds}}{p}}-\sum_{1 \leq s\leq p-1} \omega^{-ts}e^{\frac{i2\pi  \tau^{ds}}{p}}\right  |  \nonumber \\
&=& \left |  \sum_{1 \leq s\leq p-1} \chi(s) \psi_a(s) - \sum_{1 \leq s\leq p-1} \chi(s) \psi_1(s) \right | \nonumber \\ 
&\leq & 4 p^{1/2} \log^2 p, 
\end{eqnarray}
where  $\chi(s)=e^{i \pi s t/p}$, and $ \psi_a(s)=e^{i2\pi a \tau^{ds}/p}$. Taking absolute value in (\ref{eq3355.390}) and replacing \eqref{eq3355.393}, and \eqref{eq3355.395}, return
\begin{eqnarray} \label{eq3355.397}
|D(d,p)|&=& \left|  	\sum_{\gcd(n,p-1)=1} e^{i2\pi b \tau^n/p} -\sum_{\gcd(n,p-1)=1} e^{i2\pi  \tau^n/p} \right| \\ 
& \leq &      \frac{1}{q} \sum_{0 \leq t\leq q-1} \left ( 4p^{1/2} \log^2 p \right ) \cdot \left ( \frac{4 q \log \log p} {t} \right ) \nonumber\\
&\leq & 16p^{1/2} (\log^2 p)(\log q)( \log \log p )\nonumber\\
&\leq & 16p^{1/2} \log^4 p \nonumber,
\end{eqnarray}
where $q=p+o(p)$. 
\end{proof}

\section{Double Exponential Sums} \label{S3700}

\begin{lem} \label{lem3700.200} Given a small number $\varepsilon>0$. Let $p$ be a large prime number, and let $d\mid p-1$ be a small divisor. If $\tau$ is a primitive root modulo $p$, then,
\begin{equation}\label{eq3700.200}
\sum_{\substack{0< a< p\\\gcd(n,\varphi(p)/d)=1}}e^{\frac{i2\pi a(\tau^{dn}-u)}{p}}
\ll p^{1-\varepsilon}.
\end{equation}
\end{lem}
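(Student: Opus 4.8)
The plan is to evaluate the double sum in \eqref{eq3700.200} by summing over the additive variable $a$ first, exploiting the orthogonality of additive characters. First I would interchange the order of summation so that the inner sum is over $0 < a < p$:
\begin{equation*}
\sum_{\substack{0< a< p\\\gcd(n,\varphi(p)/d)=1}}e^{\frac{i2\pi a(\tau^{dn}-u)}{p}}
= \sum_{\gcd(n,(p-1)/d)=1}\;\sum_{0<a<p} e^{\frac{i2\pi a(\tau^{dn}-u)}{p}}.
\end{equation*}
For each fixed $n$, the inner sum over $a$ is a complete (minus the $a=0$ term) geometric series in the root of unity $e^{i2\pi(\tau^{dn}-u)/p}$. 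By orthogonality this equals $p-1$ when $\tau^{dn}\equiv u \bmod p$ and equals $-1$ otherwise. Hence the whole expression collapses to a counting problem: it equals $p\cdot N - M$, where $N$ is the number of $n$ with $\gcd(n,(p-1)/d)=1$ and $\tau^{dn}\equiv u$, and $M$ is the total number of admissible $n$.

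The key observation is that the congruence $\tau^{dn}\equiv u \bmod p$ has at most one solution $n$ in the relevant range (since $\tau^{dn}$ ranges injectively over the index classes), so $N\leq 1$; meanwhile $M$ is bounded by $(p-1)/d \leq p$. Therefore the magnitude of the double sum is $O(p)$ from the main counting and $O(p)$ from the $M$ term, and a more careful accounting shows the dominant contribution is genuinely of size $p^{1-\varepsilon}$ once one observes that $N=1$ occurs only when $u$ is itself a $d$-power of the prescribed order, contributing $p$, which must be absorbed or else the sum is shown to have the smaller order claimed. The cleaner route I would actually pursue is to not collapse by orthogonality but instead bound $\delta(d,p)$ directly: reverse the roles, treating the sum as an incomplete Weil-type sum in $n$ weighted by the character in $a$, and invoke Theorem \ref{thm3344.500}, which already gives $\sum_{\gcd(n,(p-1)/d)=1} e^{i2\pi a\tau^{dn}/p} \ll p^{1-\varepsilon}$ uniformly in $a\in[1,p-1]$.

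Thus the concrete plan is: write the double sum as $\sum_{0<a<p}\bigl(\sum_{\gcd(n,(p-1)/d)=1} e^{i2\pi a\tau^{dn}/p}\bigr)e^{-i2\pi au/p}$, apply the uniform bound of Theorem \ref{thm3344.500} to the parenthesized inner sum to get $\ll p^{1-\varepsilon}$ for each $a$, and then control the outer sum over $a$. The main obstacle is that naively summing the bound $p^{1-\varepsilon}$ over the $p-1$ values of $a$ yields only $p^{2-\varepsilon}$, which is far too weak; so the real work is to avoid the triangle inequality in $a$ and instead use the square-root cancellation available in $a$ as well. I would do this by a Cauchy--Schwarz / large-sieve step, bounding the double sum by $\bigl(\sum_{0<a<p}\bigl|\sum_{\gcd(n,(p-1)/d)=1}e^{i2\pi a\tau^{dn}/p}\bigr|^2\bigr)^{1/2} p^{1/2}$, then expanding the square and using orthogonality over $a$ to reduce to counting pairs $(n_1,n_2)$ with $\tau^{dn_1}\equiv\tau^{dn_2}$, which forces $n_1=n_2$ and gives a second moment of size $O(p\cdot (p-1)/d)$. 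The hardest part will be managing this second-moment bound cleanly so that the resulting estimate lands at $p^{1-\varepsilon}$ rather than a trivial $O(p)$, which may require exploiting the coprimality condition $\gcd(n,(p-1)/d)=1$ to save the extra power.
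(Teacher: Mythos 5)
Your first computation is the decisive one, and you should have trusted it instead of abandoning it. Summing over $a$ first by orthogonality is exact: the double sum equals $pN-M$, where $M=\#\{n:\gcd(n,(p-1)/d)=1\}$ (equal to $\varphi((p-1)/d)$ for the natural range $1\leq n\leq (p-1)/d$) and $N\in\{0,1\}$, with $N=1$ precisely when $\ord_p u=(p-1)/d$. Since $\varphi(m)\gg m/\log\log m$, this gives $|pN-M|\gg p/(d\log\log p)$ in every case: if $N=1$ the sum is $p-\varphi((p-1)/d)\geq p/2$, and if $N=0$ it is $-\varphi((p-1)/d)$, of size roughly $p/d$. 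For a small divisor $d$ and fixed $\varepsilon>0$ this is incompatible with the claimed bound $\ll p^{1-\varepsilon}$; your orthogonality computation is in fact a disproof of the statement as written, not a step toward proving it. The Cauchy--Schwarz fallback you sketch cannot rescue matters: the second moment $\sum_{a}\bigl|\sum_{n}e^{i2\pi a\tau^{dn}/p}\bigr|^2$ equals $pM-M^2$ by the same diagonal argument, so Cauchy--Schwarz yields only about $p^{1/2}(pM)^{1/2}\approx p^{3/2}/d^{1/2}$, worse than trivial. There is no further cancellation to be found, because the exact value of the sum is already determined.

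For comparison, the paper's own proof takes exactly your ``cleaner route'': it writes the sum as $\sum_{0<a<p}e^{-i2\pi au/p}\sum_{\gcd(n,(p-1)/d)=1}e^{i2\pi a\tau^{dn}/p}$, invokes Theorem \ref{thm3500.900} to replace the inner sum by the $a$-independent quantity $\sum_{n}e^{i2\pi\tau^{dn}/p}+O(p^{1/2}\log^4 p)$, and then factors the outer sum out as $\sum_{0<a<p}e^{-i2\pi au/p}=-1$. The flaw sits exactly where you predicted the difficulty would be: the $O(p^{1/2}\log^4 p)$ error term depends on $a$ and cannot be pulled outside the $a$-sum; summed over the $p-1$ values of $a$ it contributes $O(p^{3/2}\log^4 p)$, not $O(p^{1/2}\log^4 p)$. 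So the paper's argument has a genuine gap at the same spot, consistent with your exact evaluation showing that the asserted bound cannot hold. The correct conclusion to draw from your work is that this lemma, and anything downstream that relies on treating this quantity as an error term of size $o(p)$, needs to be reformulated.
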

\begin{proof} To compute an upper bound, rearrange the double finite sum as follows.
\begin{equation}\label{eq2777.204}
\sum_{\substack{0< a< p\\\gcd(n,\varphi(p)/d)=1}}e^{\frac{i2\pi a(\tau^{dn}-u)}{p}}=\sum_{0< a< p}e^{-i2\pi au/p}\sum_{\gcd(n,\varphi(p)/d)=1}e^{i2\pi a\tau^{dn}/p}.
\end{equation}
Applying Theorem \ref{thm3500.900} to remove the $a$ dependence of the inner finite sum, yields
\begin{eqnarray}\label{eq2777.106}
T(p)
&=& \sum_{0< a< p}e^{-i2\pi au/p}\sum_{\gcd(n,\varphi(p)/d)=1}e^{i2\pi a\tau^{dn}/p}\\
&=&\sum_{0< a< p}e^{-i2\pi au/p}\left (\sum_{\gcd(n,\varphi(p)/d)=1}e^{i2\pi \tau^{dn}/p} +O\left (p^{1/2}\log^4 p \right )\right )\nonumber.
\end{eqnarray}
Taking absolute value, and  applying Theorem \ref{thm3344.500} (or Theorem \ref{thm3535.100}), yield
\begin{eqnarray}\label{eq7777.002}
|T(p)|
&\leq& \left |\sum_{0< a< p}e^{-i2\pi au/p}\right |\left |\sum_{\gcd(n,\varphi(p)/d)=1}e^{i2\pi \tau^{dn}/p} +O\left (p^{1/2}\log^4 p \right )\right |\nonumber\\
&\leq& \left |\sum_{0< a< p}e^{-i2\pi au/p}\right |\left (\left |\sum_{\gcd(n,\varphi(p)/d)=1}e^{i2\pi \tau^{dn}/p}\right | +O\left (p^{1/2}\log^4 p \right )\right )\nonumber\\
&\ll& \left | -1 \right | \cdot \left (p^{1-\varepsilon}+p^{1/2}\log^4 p\right )\nonumber\\
&\ll& p^{1-\varepsilon},
\end{eqnarray}
where $\sum_{0< a< p}e^{i2\pi au/p}=-1$ for any $u\ne0$, and $\varepsilon<1/2$ is a small number. 
\end{proof}

\section{The Main Term} \label{S2888}
The main term of two or more simultaneous elements requires the average order of a product of totient functions. A lower bound for the product two totients will be computed here.
\begin{lem} \label{lem2888.100} If $x\geq 1$ is a large number, and $d,e\ll (\log x)^B$, with $B\geq 0$, then
\begin{equation}\label{eq2888.100}
\sum_{x\leq p\leq 2x} \frac{\varphi((p-1)/d)}{p}\cdot\frac{\varphi((p-1)/e)}{p}\gg \frac{x}{(\log x)^{4B+1}(\log \log x)^2}.
\end{equation}
\end{lem}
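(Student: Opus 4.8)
The plan is to reduce the sum to a pointwise lower bound on each totient factor, combined with a count of the admissible primes. First I would observe that the summand $\varphi((p-1)/d)$ is only meaningful when $d\mid p-1$, and likewise $\varphi((p-1)/e)$ requires $e\mid p-1$; hence the sum is effectively supported on the primes $p\in[x,2x]$ with $p\equiv 1 \bmod L$, where $L=\lcm(d,e)$. I would therefore write
\[
\sum_{x\leq p\leq 2x} \frac{\varphi((p-1)/d)}{p}\cdot\frac{\varphi((p-1)/e)}{p}
=\sum_{\substack{x\leq p\leq 2x\\ p\equiv 1 \bmod L}} \frac{\varphi((p-1)/d)}{p}\cdot\frac{\varphi((p-1)/e)}{p},
\]
and bound the right-hand side from below by controlling the individual terms and the number of primes in the progression.

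For the pointwise estimate I would invoke Landau's theorem $\varphi(m)\gg m/\log\log m$. Since $d,e\ll(\log x)^B$ and $p\in[x,2x]$, the arguments satisfy $(p-1)/d\asymp x/d$ and $(p-1)/e\asymp x/e$, so that $\log\log((p-1)/d)$ and $\log\log((p-1)/e)$ are both $\sim \log\log x$. This yields
\[
\frac{\varphi((p-1)/d)}{p}\gg \frac{1}{d\log\log x}\qquad\text{and}\qquad \frac{\varphi((p-1)/e)}{p}\gg \frac{1}{e\log\log x},
\]
so the product of the two factors is $\gg 1/(de(\log\log x)^2)$ for every admissible prime.

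Next I would count the admissible primes. By the Siegel--Walfisz theorem applied to the progression $p\equiv 1\bmod L$, and using that $L=\lcm(d,e)\leq de\ll(\log x)^{2B}$ lies well inside the permissible range of moduli, the number of primes $p\in[x,2x]$ with $p\equiv 1\bmod L$ satisfies
\[
\#\{x\leq p\leq 2x: p\equiv 1 \bmod L\}\gg \frac{x}{\varphi(L)\log x};
\]
here the Siegel--Walfisz error term is $O(x\exp(-c\sqrt{\log x}))$, which is negligible against the main term $x/(\varphi(L)\log x)$ because $\varphi(L)\ll(\log x)^{2B}$. Combining the two estimates and using $\varphi(L)\leq L\ll(\log x)^{2B}$ together with $de\ll(\log x)^{2B}$ gives
\[
\sum_{\substack{x\leq p\leq 2x\\ p\equiv 1 \bmod L}} \frac{\varphi((p-1)/d)}{p}\cdot\frac{\varphi((p-1)/e)}{p}
\gg \frac{x}{\varphi(L)\log x}\cdot\frac{1}{de(\log\log x)^2}
\gg \frac{x}{(\log x)^{4B+1}(\log\log x)^2},
\]
which is the claimed bound. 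It is precisely the density factor $1/\varphi(L)\gg(\log x)^{-2B}$ together with the size $1/(de)\gg(\log x)^{-2B}$ of each term that combine to produce the exponent $4B+1$.

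The main obstacle is the uniformity required in the prime-counting step: one needs a lower bound for $\pi(2x;L,1)-\pi(x;L,1)$ that holds uniformly over all moduli $L\ll(\log x)^{2B}$, and Siegel--Walfisz is exactly the tool that delivers this for moduli up to a fixed power of $\log x$. The only care needed is to confirm that the (ineffective) error term is dominated by the main term in this range, which holds since $(\log x)^{2B}=\exp(2B\log\log x)$ is far smaller than $\exp(c\sqrt{\log x})$. The remaining steps, namely the totient lower bound and the bookkeeping with $d$, $e$, and $L$, are routine.
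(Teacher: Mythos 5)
Your proposal is correct and follows essentially the same route as the paper: a pointwise totient lower bound $\varphi(m)\gg m/\log\log m$ on each factor, restriction to the primes $p\equiv 1$ modulo a common modulus, and the prime number theorem in arithmetic progressions (Siegel--Walfisz) to count those primes, yielding the exponent $4B+1$. The only cosmetic difference is that you use the modulus $\lcm(d,e)$ where the paper restricts to the smaller progression $p\equiv 1 \bmod de$; both give the claimed lower bound.
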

\begin{proof} The totient function has the lower bound $\varphi(n)/n\gg1/\log\log n$, see \cite[Theorem 15]{RS62}. Replacing this estimate yields
\begin{eqnarray}\label{eq1777.102}
M(x,u,v)&=&\sum_{x\leq p\leq 2x} \frac{\varphi((p-1)/d)}{p}\cdot\frac{\varphi((p-1)/e)}{p}\\
&=&\sum_{x\leq p\leq 2x} \frac{\varphi((p-1)/d)}{p-1}\cdot\frac{\varphi((p-1)/e)}{p-1}\left (1-\frac{1}{p}\right )^2 \nonumber\\
&\gg&\sum_{\substack{x\leq p\leq 2x\\p\equiv 1 \bmod de}} \frac{1}{d}\frac{1}{\log \log p}\cdot \frac{1}{e}\frac{1}{\log \log p}\nonumber\\
&\gg& \frac{1}{(\log x)^{2B}}\frac{1}{(\log \log x)^2}\sum_{\substack{x\leq p\leq 2x\\p\equiv 1 \mod de}} 1\nonumber,
\end{eqnarray}
since $d,e\ll (\log x)^B$, with $B\geq 0$. Applying the \textit{prime number theorem on arithmetic progression} over the short interval $[x,2x]$, yields
\begin{eqnarray}\label{eq1777.104}
M(x,u,v)&=&\frac{1}{(\log x)^{2B}}\frac{1}{(\log \log x)^2}\sum_{\substack{x\leq p\leq 2x\\p\equiv 1 \bmod de}} 1\\
&\gg& \frac{x}{(\log x)^{2B}(\log \log x)^2}\cdot \left (\frac{x}{\varphi(de)\log x}+O\left (xe^{-c\sqrt{\log x}}\right)\right) \nonumber\\
&\gg& \frac{x}{(\log x)^{4B+1}(\log \log x)^2}\nonumber,
\end{eqnarray}
where $\varphi(de)\leq de\leq (\log x)^{2B}$, and $c> 0$ is an absolute constant.
\end{proof}

This analysis is effective and unconditional for the prescribed indices $d,e\ll \log^B x$, where $B\geq 0$ is a constant, as limited by the current version of the prime number theorem on arithmetic progressions, confer \cite[Theorem 3.10]{EL85}. \\

The exact asymptotic for the average order of a product $k$ totient functions over the primes is proved in \cite{VR73}, and related discussions are given in \cite[p.\ 16]{MP04}. The generalization to number fields appears in \cite{HG84}. However, the exact asymptotic for the average order of a product of $k$ totient functions over the primes in arithmetic progressions seems to be unknown. For example, for equal prescribed multiplicative orders $(p-1)/d$, it should have the form

\begin{equation}\label{eq2888.110}
\sum_{\substack{x\leq p\leq 2x\\p\equiv a \bmod q}}\left (\frac{\varphi((p-1)/d)}{p-1}\right )^k\stackrel{?}{=}A_1\frac{x}{\varphi(q)\log x}+O\left (xe^{-c\sqrt{\log x}}\right),
\end{equation}
where $q=d^k\leq (\log x)^{Bk}$, $B\geq 0$ constant, $1\leq a<q$ are relatively prime integers, $A_1=A_1(a,q)>0$ is a constant, and $c>0$ is an absolute constant. But, for distinct prescribed multiplicative orders $(p-1)/d_i$, it should have the form
\begin{equation}\label{eq2888.115}
\sum_{\substack{x\leq p\leq 2x\\p\equiv a \bmod q}}\prod_{1\leq i\leq k}\left (\frac{\varphi((p-1)/d_i)}{p-1}\right )\stackrel{?}{=}A_k\frac{x}{\varphi(q)\log x}+O\left (xe^{-c\sqrt{\log x}}\right),
\end{equation}
where $q=d_1d_2\cdots d_k\leq (\log x)^{Bk}$, and $A_k=A_k(a,q)>0$, which is slightly more complex. 

\section{The Error Terms} \label{S2777}

\begin{lem} \label{lem2777.200} Assume $\ord_pu \ne (p-1)/d$. If $x$ is a large number, then
\begin{equation}\label{eq2777.200}
E_1(x)=\sum_{x\leq p\leq 2x} \frac{1}{p}\sum_{\substack{0\leq  a< n\\\gcd(n,\varphi(p)/d)=1}}e^{\frac{i2\pi a(\tau^{dn}-u)}{p}} \cdot \frac{1}{p} \sum_{\substack{0<leq b< \varphi(p)/e\\\gcd(m,\varphi(p)/e)=1}}e^{\frac{i2\pi b(\tau^{em}-v)}{p}}
=0\nonumber.
\end{equation}
\end{lem}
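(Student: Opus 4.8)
The plan is to read the first factor of each summand as the divisors-free characteristic function $\Psi(u,d)$ supplied by Lemma \ref{lem3000.114}, and then to let its built-in vanishing do all the work. First I would match the notation: the inner sum over $a$ runs over a complete residue system modulo $p$, so the geometric-series identity invoked in the proof of Lemma \ref{lem3000.103} collapses $\tfrac1p\sum_{a}e^{i2\pi a(\tau^{dn}-u)/p}$ to the indicator of $\tau^{dn}\equiv u \bmod p$. Summing this indicator over the indices $n$ with $\gcd(n,(p-1)/d)=1$, along which $\tau^{dn}$ sweeps out precisely the $\varphi((p-1)/d)$ elements of order $(p-1)/d$, reconstitutes
\[
\frac1p\sum_{\gcd(n,\varphi(p)/d)=1}\ \sum_{0\le a\le p-1} e^{i2\pi a(\tau^{dn}-u)/p}=\Psi(u,d),
\]
which by Lemma \ref{lem3000.114} equals $1$ if $\ord_p u=(p-1)/d$ and $0$ otherwise.

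The key step is then immediate. Under the standing hypothesis $\ord_p u\ne(p-1)/d$, Lemma \ref{lem3000.114} forces $\Psi(u,d)=0$ for every prime $p$ occurring in the sum. Hence each summand of $E_1(x)$ is the product of this vanishing first factor with the second factor, and is therefore $0$ term by term; summing over $x\le p\le 2x$ gives $E_1(x)=0$. In particular, no bound on the second factor---the double exponential sum in $b$ and $m$ attached to $v$---is needed, because it is multiplied against a quantity that is exactly zero, not merely small.

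I do not expect a genuine obstacle here; the statement is a consistency check confirming that the exponential-sum representation discards the primes at which $u$ fails to have the prescribed order $(p-1)/d$. The only care required is bookkeeping of the summation ranges: one must confirm that the $a$-sum is complete modulo $p$ (so that orthogonality applies and Lemma \ref{lem3000.114} is literally the first factor) and that the coprimality condition $\gcd(n,(p-1)/d)=1$ isolates exactly the residues $\tau^{dn}$ of order $(p-1)/d$. Once the ranges in the displayed factor are aligned with the hypotheses of Lemma \ref{lem3000.114}, the vanishing is purely formal.
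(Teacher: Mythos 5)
Your argument is exactly the paper's: the first factor is the divisors-free indicator $\Psi(u,d)$ of Lemma \ref{lem3000.114}, which vanishes identically under the hypothesis $\ord_p u\ne (p-1)/d$, so every summand of $E_1(x)$ is zero. You simply unpack the orthogonality and coprimality bookkeeping that the paper leaves implicit; the approach and the key citation are the same.
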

\begin{proof} By hypothesis, $\ord_pu \ne (p-1)/d$, so the first finite sum
\begin{equation}\label{eq2777.202}
\sum_{\substack{0\leq  a< n\\\gcd(n,\varphi(p)/d)=1}}e^{\frac{i2\pi a(\tau^{dn}-u)}{p}} 
=0
\end{equation}
vanishes, see Lemma \ref{lem3000.114}.
\end{proof}

\begin{lem} \label{lem2777.300} Assume $\ord_p v \ne (p-1)/e$. If $x$ is a large number, then
\begin{equation}\label{eq2777.300}
E_2(x)=\sum_{x\leq p\leq 2x} \frac{1}{p}\sum_{\substack{0< b< n\\\gcd(m,\varphi(p)/e)=1}}e^{\frac{i2\pi b(\tau^{em}-v)}{p}} \cdot \frac{1}{p} \sum_{\substack{0\leq b< \varphi(p)/e\\\gcd(m,\varphi(p)/e)=1}}e^{\frac{i2\pi b(\tau^{em}-v)}{p}}
=0\nonumber.
\end{equation}
\end{lem}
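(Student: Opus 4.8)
The plan is to treat this as the symmetric counterpart of Lemma \ref{lem2777.200}, with the pair $(u,d)$ replaced by $(v,e)$, and to invoke the divisor-free characteristic function of Lemma \ref{lem3000.114} directly. The quantity $E_2(x)$ is a sum over primes $p\in[x,2x]$ of a product of two factors, and the decisive observation is that the second factor is exactly the characteristic function $\Psi(v,e)$, which vanishes whenever $v$ fails to have the prescribed multiplicative order $(p-1)/e$.

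First I would record the routine bookkeeping that for a prime $p$ one has $\varphi(p)=p-1$, so that the condition $\gcd(m,\varphi(p)/e)=1$ is identical to $\gcd(m,(p-1)/e)=1$ and the range $0\leq b<\varphi(p)/e$ coincides with $0\leq b<(p-1)/e$. Under this identification the second inner sum is
\begin{equation}
\frac{1}{p}\sum_{\substack{0\leq b<(p-1)/e\\\gcd(m,(p-1)/e)=1}}e^{\frac{i2\pi b(\tau^{em}-v)}{p}}=\Psi(v,e),
\end{equation}
which is precisely the expression appearing in \eqref{eq3000.117} after the substitution $u\mapsto v$ and $d\mapsto e$.

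Finally, the hypothesis $\ord_p v\neq(p-1)/e$ together with Lemma \ref{lem3000.114} forces $\Psi(v,e)=0$ for every prime $p\in[x,2x]$. Hence each product summand carries a vanishing factor, so every term of the outer sum over $p$ is zero and $E_2(x)=0$ identically. I expect no genuine obstacle here: the argument is purely formal once the summation ranges are aligned with those of Lemma \ref{lem3000.114}, the only points requiring attention being the identification $\varphi(p)=p-1$ and the consistent relabelling of the variables $(u,d)\to(v,e)$, exactly as in the one-line proof of Lemma \ref{lem2777.200}.
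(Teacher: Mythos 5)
Your proposal is correct and matches the paper's own argument: both identify one of the two factors as the divisor-free indicator function $\Psi(v,e)$ of Lemma \ref{lem3000.114} and conclude that the hypothesis $\ord_p v \neq (p-1)/e$ forces that factor, and hence every summand, to vanish. Your version merely adds the (harmless) bookkeeping that $\varphi(p)=p-1$ aligns the summation ranges with those of Lemma \ref{lem3000.114}, which the paper leaves implicit.
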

\begin{proof} By hypothesis, $\ord_pv \ne (p-1)/e$, so the second finite sum
\begin{equation}\label{eq2777.302}
\sum_{\substack{0\leq  b< n\\\gcd(m,\varphi(p)/e)=1}}e^{\frac{i2\pi b(\tau^{em}-v)}{p}} 
=0
\end{equation}
vanishes, see Lemma \ref{lem3000.114}.
\end{proof}

\begin{lem} \label{lem2777.400} If $x$ is a large number, then
\begin{equation}\label{eq2777.400}
E_3(x)=\sum_{x\leq p\leq 2x} \frac{1}{p}\sum_{\substack{0< a< p\\\gcd(n,\varphi(p)/d)=1}}e^{\frac{i2\pi a(\tau^{dn}-u)}{p}} \cdot \frac{1}{p} \sum_{\substack{0< b< \varphi(p)/e\\\gcd(m,\varphi(p)/e)=1}}e^{\frac{i2\pi b(\tau^{em}-v)}{p}}
\ll x^{1-2\varepsilon}\nonumber.
\end{equation}
\end{lem}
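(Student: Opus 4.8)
The plan is to bound the double sum $E_3(x)$ by factoring each summand over $p$ into a product of two independent exponential sums, applying the single-sum estimate from Lemma \ref{lem3700.200} (equivalently Theorem \ref{thm3344.500}) to each factor, and then summing the resulting bound over the primes $p\in[x,2x]$. The key observation is that the inner double sum over $a$ and $n$ separates as in equation \eqref{eq2777.204}, namely
\begin{equation}\label{eq2777.402}
\sum_{\substack{0< a< p\\\gcd(n,\varphi(p)/d)=1}}e^{\frac{i2\pi a(\tau^{dn}-u)}{p}}=\sum_{0<a<p}e^{-i2\pi au/p}\sum_{\gcd(n,\varphi(p)/d)=1}e^{i2\pi a\tau^{dn}/p},
\end{equation}
and similarly for the $b,m$ sum with index $e$ and element $v$.

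First I would note that each of the two factors in the summand of $E_3(x)$ is exactly the double sum estimated in Lemma \ref{lem3700.200}. That lemma gives, for each fixed large prime $p$ and each fixed divisor $d\mid p-1$,
\begin{equation}\label{eq2777.404}
\left|\sum_{\substack{0< a< p\\\gcd(n,\varphi(p)/d)=1}}e^{\frac{i2\pi a(\tau^{dn}-u)}{p}}\right|\ll p^{1-\varepsilon},
\end{equation}
and the analogous bound with $d,u$ replaced by $e,v$ for the second factor. Substituting both bounds into the definition of $E_3(x)$ and using the normalizing factor $1/p$ attached to each factor, the summand over a single prime $p$ is bounded by
\begin{equation}\label{eq2777.406}
\frac{1}{p}\cdot p^{1-\varepsilon}\cdot\frac{1}{p}\cdot p^{1-\varepsilon}=p^{-2\varepsilon}\ll x^{-2\varepsilon}
\end{equation}
uniformly for $p\in[x,2x]$, since $p\asymp x$ on this interval.

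The final step is to sum the per-prime bound over the primes $p\in[x,2x]$. There are at most $O(x/\log x)$ such primes by the prime number theorem, and in any case at most $O(x)$ integers in the interval, so
\begin{equation}\label{eq2777.408}
|E_3(x)|\ll\sum_{x\leq p\leq 2x}x^{-2\varepsilon}\ll x\cdot x^{-2\varepsilon}=x^{1-2\varepsilon},
\end{equation}
which is the claimed bound. I would present this by invoking the triangle inequality on the outer sum over $p$, then factoring, then applying Lemma \ref{lem3700.200} to each factor.

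I do not expect a genuine obstacle here, since all the analytic content has already been absorbed into Lemma \ref{lem3700.200}; the argument is purely a matter of separating variables via \eqref{eq2777.402}, applying the established single-prime bound to both factors, and trivially summing over $p$. The only point requiring a small amount of care is bookkeeping the exponent: both factors contribute $p^{-\varepsilon}$ after the $1/p$ normalization, and it must be checked that the $\varepsilon$ supplied by Lemma \ref{lem3700.200} can be taken uniform in $p$ and strictly below $1/2$, so that $x^{1-2\varepsilon}$ is genuinely of smaller order than the main term $x/((\log x)^{4B+1}(\log\log x)^2)$ of Lemma \ref{lem2888.100}; this is exactly the comparison that will make the error terms negligible in the eventual proof of Theorem \ref{thm8888.002}.
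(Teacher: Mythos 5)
Your proposal matches the paper's own proof essentially verbatim: the paper also defines the per-prime factor $T(d,p)$, applies Lemma \ref{lem3700.200} to each of $T(d,p)$ and $T(e,p)$ to get the bound $p^{1-\varepsilon}$, and then sums the resulting $p^{-2\varepsilon}\ll x^{-2\varepsilon}$ trivially over the primes in $[x,2x]$ via the prime number theorem. No substantive difference in route or content.
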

\begin{proof} To compute an upper bound, define the exponential sum
\begin{equation}\label{eq2777.402}
T(d,p)=\sum_{\substack{0< a< p\\\gcd(n,\varphi(p)/d)=1}}e^{\frac{i2\pi a(\tau^{dn}-u)}{p}}.
\end{equation}
Now, apply Lemma \ref{lem3700.200} to each factor $T(d,p)$ and $T(e,p)$ to obtain the followings.
\begin{eqnarray}\label{eq2777.404}
E_3(x)&=&\sum_{x\leq p\leq 2x} \left (\frac{1}{p}\cdot T(d,p) \right )\cdot \left (\frac{1}{p}\cdot T(e,p)\right )\\
&\ll&\sum_{x\leq p\leq 2x} \left (\frac{1}{p}\cdot p^{1-\varepsilon} \right )\cdot \left (\frac{1}{p}\cdot p^{1-\varepsilon}\right ) \nonumber.
\end{eqnarray}
Take an upper bound, and apply the prime number theorem:  
\begin{eqnarray}\label{eq2777.406}
E_3(x)
&\ll& \frac{1}{x^{2\varepsilon}}\sum_{x\leq p\leq 2x} 1\\
&\ll& x^{1-2\varepsilon}\nonumber,
\end{eqnarray}
as $x\to \infty$.
\end{proof}

\section{Simultaneous Prescribed Multiplicative Orders} \label{S3388}
The \textit{multiplicative order} of an element $u\in \F_p$ in finite field is the smallest integer $n\geq 1$ for which $u^n=1$ in $\F_p$, see Section \ref{S3000} for additional details. The simpler case of simultaneous prescribed multiplicative orders of two admissible rational numbers is investigated in this section. The analysis of simultaneous and prescribed multiplicative orders for $k$-tuple of admissible rational numbers are similar to this analysis, but have bulky and cumbersome notation.   

\begin{dfn}\label{dfn3388.000} {\normalfont Fix a pair of rational numbers $u,v\ne\pm1$ such that $u^av^b\ne\pm1$ for any $a,b\in \Z$. The elements $u, v\in \F_p$ are said to be simultaneous of equal orders $\ord_pu \mid p-1$, and $\ord_pv \mid p-1$ respectively, if $\ord_pu=\ord_pv$ infinitely often as $p\to \infty$. Otherwise, the elements are said to be simultaneous of unequal orders, if $\ord_pu\ne \ord_pv$, infinitely often as $p\to \infty$.  
}
\end{dfn}

Given a pair of small integer indices $d\geq1$ and $e\geq1$, the number of primes $x\leq p\leq 2x$, which have simultaneous elements $u>1$ and $v>1$ of prescribed orders $\ord_pu \mid (p-1)/d$ and $\ord_pv \mid (p-1)/e$ modulo $p\geq 2$, respectively, is defined by
\begin{eqnarray}\label{eq3388.000}
R(x,u,v)&=&\#\left \{\; x\leq p\leq 2x: p \text{ is prime and }\right.\\
 &&\hskip 1.0 in \left. \ord_p(u)=(p-1)/d, \ord_{p}(v)=(p-1)/e \;\right \}\nonumber, 
\end{eqnarray}
The small integers indices $d\geq1$ and $e\geq1$ prescribed the multiplicative orders of the fixed pair $u>1$ and $v>1$

\begin{proof} (Theorem \ref{thm8888.002}) Substitute the indicator function $\Psi_{p}(u,d)$ for elements $u$ of order $\ord_pu=(p-1)/d$ modulo $p$, and the indicator function $\Psi_{p}(u,e)$ for elements $v$ of order $\ord_pv=(p-1)/e$ modulo $p$, see Lemma \ref{lem3000.114}, to construct the associated counting function for the number of such primes $p\in [x,2x]$:
\begin{eqnarray}\label{eq8888.002}
R(x,u,v)&=&\sum_{x\leq p\leq 2x} \Psi_{p}(u,d)\cdot \Psi_{p}(v,e)\\
&=&\sum_{x\leq p\leq 2x}  \frac{1}{p}\sum_{\substack{0\leq a< p\\\gcd(n,\varphi(p)/d)=1}}e^{i2\pi a(\tau^{dn}-u)/p}\cdot \frac{1}{p} \sum_{\substack{0\leq b< p\\\gcd(m,\varphi(p)/e)=1}}e^{i2\pi b(\tau^{em}-v)/p}\nonumber\\
&=&M(x,u,v)\quad +\quad E_1(x)+\quad E_2(x)+\quad E_3(x)\nonumber.
\end{eqnarray}
The main term $M(x,u,v)$ is determined by $(a,b)=(0,0)$, the error terms $E_1(x)$, $E_2(x)$, and $E_3(x)$ are determined by $(a,b)=(0,b\ne0)$, $(a,b)=(a\ne0,0)$, and $(a,b)\ne(0,0)$, respectively. \\

Summing the main term $M(x,u,v)$, estimated in Lemma \ref{lem2888.100}, and the error terms $E_1(x)$, $E_2(x)$, and $E_3(x)$ estimated in Lemma \ref{lem2777.200}, Lemma \ref{lem2777.300}, and Lemma \ref{lem2777.400} respectively, yield  
\begin{eqnarray}\label{eq7777.002}
R(x,u,v)&=&M(x,u,v)\quad +\quad E_1(x)+\quad E_2(x)+\quad E_3(x)\\
&\gg&\frac{x}{(\log x)^{4B+1}(\log \log x)^2}+ 0+0+x^{1-2\varepsilon}\nonumber\\
&\gg & \frac{x}{(\log x)^{4B+1}(\log \log x)^2}  \nonumber,
\end{eqnarray}
where $\varepsilon>0$ is a small number, as $x\to\infty$.
\end{proof}
The current analysis is unconditional for any indices product $de\ll (\log x)^{2B}$, where $B\geq 0$ is a constant. Assuming the RH, it appears that this analysis can handled any indices product $de$ as large as $de\leq p^{1/2-\delta}$, where $\delta>0$, but no effort was made to verify this observation, confer the proof for the lower bound of the main term in Section \ref{S2888} for some information. \\

\section{Probabilistic Results For Simultaneous Orders} \label{S6688}
\begin{thm}\label{thm6688.000} For any pair of random relatively prime integers $a>1$ and $b>1$, and any sufficiently large prime $p$, the ratio 
\begin{equation}\label{eq6688.100}
\frac{\ord_p(a)}{\ord_p(b)}\ne1\end{equation}
is true with probability $1+o(1)> 1/2$.
\end{thm}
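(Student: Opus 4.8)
The plan is to argue heuristically that the orders $\ord_p(a)$ and $\ord_p(b)$ of two independent random bases are almost always unequal, so that equality is a measure-zero event in the limit. First I would reduce the claim to estimating the proportion of primes $p\leq x$ for which $\ord_p(a)=\ord_p(b)$ holds. Writing $\ord_p(a)=(p-1)/d$ and $\ord_p(b)=(p-1)/e$ with $d=\ind_p(a)$ and $e=\ind_p(b)$, the condition $\ord_p(a)=\ord_p(b)$ is exactly the condition $d=e$. So the probabilistic statement becomes: the indices $\ind_p(a)$ and $\ind_p(b)$ coincide only for a vanishing proportion of primes. The main thrust is to show that the two indices behave like independent random divisors of $p-1$, and that the diagonal $d=e$ carries density tending to $0$.

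Next I would quantify the distribution of a single index. For a fixed base $a$, the index $\ind_p(a)=d$ occurs (heuristically, and provably on average) with a frequency proportional to $\varphi(d)/d^2$ or a comparable weight reflecting the number $\varphi(d)$ of elements of index $d$ among the $p-1$ residues. Summing over primes via the machinery of Section~\ref{S2888}, the counting function for $\{p\leq x:\ind_p(a)=d\}$ is governed by the average of $\varphi((p-1)/d)/(p-1)$, which is positive but decays in $d$. Treating $a$ and $b$ as relatively prime gives multiplicative independence (as recorded after Definition~\ref{dfn8888.000}), so that the joint event $\ind_p(a)=\ind_p(b)=d$ factors, up to controlled error terms analogous to $E_1,E_2,E_3$, into a product of the two single-index densities. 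Summing the diagonal contribution over all admissible $d$ then yields a convergent series whose tail is small; crucially the total mass on $d=e$ is bounded by $\sum_d (\text{density of index }d)^2$, which is strictly less than $1$ and in fact $o(1)$ relative to the full count once one accounts for how many distinct index values are realized as $p\to\infty$.

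I would then assemble these pieces: the probability that $\ord_p(a)/\ord_p(b)=1$ equals the probability that $\ind_p(a)=\ind_p(b)$, and the preceding estimate shows this is $o(1)$; hence $\Pr[\ord_p(a)/\ord_p(b)\neq 1]=1-o(1)$, which exceeds $1/2$ for all sufficiently large $p$. The exponential-sum and main-term estimates already developed (Theorem~\ref{thm3500.900} for removing the $a$-dependence, Lemma~\ref{lem2888.100} for the main term, and the vanishing of the off-diagonal sums in Lemmas~\ref{lem2777.200}--\ref{lem2777.400}) furnish exactly the unconditional tools needed to make the averaging rigorous over the short interval $[x,2x]$.

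The hard part will be making precise the sense in which ``random relatively prime integers $a,b$'' and ``sufficiently large prime $p$'' define a genuine probability space, and then controlling the diagonal sum $\sum_d$ uniformly rather than merely for each fixed $d$. The single-index density decays slowly enough that one must verify the diagonal series genuinely contributes $o(1)$ and not a positive constant; this requires an honest estimate of how the index distribution spreads out as $p$ grows, which is the delicate step where the heuristic independence of $\ind_p(a)$ and $\ind_p(b)$ must be upgraded to a quantitative bound using the equidistribution established by the Gauss and Weil sum estimates of Section~\ref{S4466}.
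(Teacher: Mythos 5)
There is a genuine gap, and it sits exactly where you flagged the ``delicate step.'' Your plan hinges on showing that the diagonal mass $\sum_d (\text{density of index } d)^2$ is $o(1)$, so that $\Pr[\ord_p(a)=\ord_p(b)]\to 0$. That cannot work: the single term $d=e=1$ already corresponds to $a$ and $b$ being simultaneous primitive roots, an event whose density is (heuristically, and conditionally on GRH) a positive constant of the shape of a twin Artin constant, roughly $(\varphi(p-1)/(p-1))^2$ on average. So the diagonal sum is bounded \emph{below} by a positive constant and the probability of equality of orders does not tend to $0$; the most one can extract from this line of reasoning is that the diagonal sum is strictly less than $1$ --- indeed at most $1/2$ --- not that it vanishes. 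Concretely, writing $\delta_d=\varphi((p-1)/d)/(p-1)$ for the proportion of residues of index $d$, the correct estimate is $\sum_{d\mid p-1}\delta_d^2\le\bigl(\max_d \delta_d\bigr)\sum_{d\mid p-1}\delta_d=\varphi(p-1)/(p-1)\le 1/2$, the last inequality because $p-1$ is even. This is precisely the paper's argument: it fixes a large prime $p$, treats $a,b$ as random with pseudo-independent indices, bounds the probability $\alpha_2$ of equal orders by this diagonal sum, and concludes $1-\alpha_2>1/2$. The ``$1+o(1)$'' in the theorem statement is not actually delivered by the paper's proof either; only the $>1/2$ bound is.

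A secondary issue: your framework averages over primes $p\in[x,2x]$ for \emph{fixed} coprime $a,b$ and tries to import the exponential-sum machinery of Sections \ref{S4466}--\ref{S2777}, whereas the paper's probability space is the opposite one (fixed large $p$, random $a,b$), and its proof is a two-line independence heuristic with no exponential sums at all. Your route is more ambitious --- it would amount to an unconditional equidistribution statement for the joint index $(\ind_p a,\ind_p b)$ uniformly in $d$, which the tools in the paper only provide for individual small indices $d,e\ll(\log x)^B$ --- and, as explained above, even if carried out it would terminate at a positive constant for the diagonal, not at $o(1)$. To repair your argument, replace the $o(1)$ claim by the max-times-sum bound $\sum_d\delta_d^2\le\varphi(p-1)/(p-1)\le 1/2$ and state the conclusion as probability at least $1/2$.
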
 

\begin{proof} Fix a large prime $p$. Two random relatively prime integers $a$ and $b$ have the same multiplicative order if and only if $\ord_p(a)=\ord_p(b)= (p-1)/d_0$ for at least one divisor $d_0 \mid p-1$. Let $\alpha_2> 0$ denotes the probability that 
\begin{equation}\label{eq6688.105}
\frac{\ord_p(a)}{\ord_p(b)}=1
\end{equation}
is true. Otherwise, $\alpha_2= 0$, and the claim in \eqref{eq6688.100} is trivially true. Assuming statistical pseudo independence, this event occurs with probability
\begin{eqnarray}\label{eq6688.110}
c_0\frac{1}{p-1}\cdot \frac{1}{p-1}<\alpha_2
&=&c_0\frac{\varphi((p-1)/d_0)}{p-1}\cdot \frac{\varphi((p-1)/d_0)}{p-1}\\
&<&\sum_{d\mid p-1}
c_d\left(\frac{\varphi((p-1)/d)}{p-1}\right)^2\nonumber\\
&\leq& \sum_{d\mid p-1}\frac{\varphi((p-1)/d)}{p-1}\nonumber\\
&=&\frac{\varphi(p-1)}{p-1}\leq \frac{1}{2}\nonumber,
\end{eqnarray}
where $c_i=c_i(a,b)\leq 1$ is a statistical independence correction factor. The lower bound $1/(p-1)^2$ in \eqref{eq6688.110} follows from the hypothesis $a>1$ and $b>1$. Hence, random relatively prime integers $a$ and $b$ of multiplicative order $\ord_p(a)\ne\ord_p(b)$, satisfy the ratio 

\begin{equation}\label{eq6688.120}
\frac{\ord_p(a)}{\ord_p(b)}\ne1
\end{equation}
with probability
\begin{equation}1-\alpha_2> \frac{1}{2}.\end{equation}
\end{proof}



\currfilename.\\

\end{document}